\newcommand{\cRG}{{\EuScript R}}
\theoremstyle{plain}
\newtheorem{thm}{Theorem}[section]
\newtheorem{lem}[thm]{Lemma}
\newtheorem{prop}[thm]{Proposition}
\newtheorem{remark}[thm]{Remark}
\theoremstyle{definition}
\newtheorem{defn}[thm]{Definition}
\theoremstyle{plain}
\newtheorem{thmx}{\bf Theorem}
\newcommand{\lemref}[1]{Lemma~\ref{#1}}
\newcommand{\propref}[1]{Proposition~\ref{#1}}
\newcommand{\C}{\mathbb{C}}
\newcommand{\D}{\mathbb{D}}
\newcommand{\h}{\hat}
\newcommand{\diam}{\operatorname{diam}}
\newcommand{\dist}{\operatorname{dist}}
\renewcommand{\mod}{\operatorname{mod}}
\newcommand{\tl}{\tilde}
\newcommand{\eps}{\epsilon}
\newcommand{\cQ}{{\mathcal Q}}
\newcommand{\cA}{{\mathcal A}}
\newcommand{\cU}{{\mathcal U}}
\newcommand{\cV}{{\mathcal V}}
\newcommand{\cI}{{\mathcal I}}
\newcommand{\cP}{{\mathcal P}}
\newcommand{\cC}{{\mathcal C}}
\newcommand{\cH}{{\mathcal H}}
\newcommand{\cR}{{\mathcal R}}
\newcommand{\cB}{{\mathcal B}}
\newcommand{\cS}{{\mathcal S}}
\newcommand{\CC}{{\Bbb C}}
\newcommand{\RR}{{\Bbb R}}
\newcommand{\TT}{{\Bbb T}}
\newcommand{\ZZ}{{\Bbb Z}}
\newcommand{\NN}{{\Bbb N}}
\newcommand{\DD}{{\Bbb D}}
\newcommand{\QQ}{{\Bbb Q}}
\newcommand{\bC}{{\mathbf C}}
\newcommand{\cren}{\cR_{\text cyl}}
\title[Renormalization and Siegel disks for complex H\'enon maps]{Renormalization and Siegel disks for\\ complex H\'enon maps}
\author{Denis Gaidashev}
\address{Uppsala University, Uppsala, Sweden}
\email{gaidash@math.uu.se}
\author{Remus Radu}
\address{Stony Brook University, Stony Brook, United States}
\email{rradu@math.stonybrook.edu}
\author{Michael Yampolsky}
\address{University of Toronto, Toronto, Canada}
\email{yampol@math.utoronto.ca}
\subjclass[2010]{37E20, 37D30, 37F25, 37F50}
\keywords{}
\begin{document}
\begin{abstract}
We use hyperbolicity of golden-mean renormalization of dissipative H{\'e}non-like maps to prove that the boundaries of Siegel disks of sufficiently dissipative quadratic complex H{\'e}non maps with golden-mean rotation number are topological circles. 

Conditionally on an appropriate renormalization hyperbolicity 
property, we  derive the same result for Siegel disks of  H{\'e}non maps with all eventually periodic rotation numbers.
\end{abstract}
\maketitle

%!TEX root = main.tex
\medskip
\noindent
\section{Introduction}\label{sec:intro}
Consider the complex quadratic H{\'e}non map written as
\[
H_{c,a}(x,y)=(x^{2}+c + ay,ax) \text{ for } a\neq 0.
\]
The maps $H_{c,a}$ and $H_{c,-a}$ are conjugate by the change of coordinates $(x,y)\mapsto (x,-y)$; and the pair of parameters $(c,a^2)$ determines the H{\'e}non map uniquely up to a biholomorphic conjugacy. 
In this parametrization the Jacobian is $-a^{2}$. 
%This form for the H{\'e}non map is more convenient for seeing the degeneracy at $a=0$, but any other normalization would do. 
Let $K^{\pm}$ be the sets of points that do not escape to infinity under forward, respectively backward iterations of the H{\'e}non map. Their topological boundaries are $J^{\pm}=\partial K^{\pm}$. Let $K=K^{+}\cap K^{-}$ and $J=J^{-}\cap J^{+}$. The sets $J^{\pm}$, $K^{\pm}$ are unbounded, connected sets in $\C^{2}$ (see \cite{BS1}). The sets $J$ and $K$ are compact (see \cite{HOV1}).  In analogy to one-dimensional dynamics, the set $J$ is called the Julia set of the H{\'e}non map. 

In this paper we will always assume that the H{\'e}non map is dissipative, $|a|<1$. Note that for $a=0$, the map $H_{c,a}$ degenerates to 
$$(x,y)\mapsto (f_c(x),0),$$ where $f_c(x)=x^2+c$ is a one-dimensional quadratic polynomial. Thus for a fixed small value of $a_0$, the one parameter family $H_{c,a_0}$ is a small perturbation of the quadratic family. 

 Note that  a  H{\'e}non map $H_{c,a}$ is determined by the multipliers $\lambda$ and $\mu$ at a fixed point uniquely up to changing the sign of $a$. In particular,
$$\lambda\mu=-a^2,$$
 the parameter $c$ is a function of $a^2$ and $\lambda$:
$$c=(1-a^{2})\left(\frac{\lambda}{2}-\frac{a^{2}}{2\lambda}\right)-\left(\frac{\lambda}{2}-\frac{a^{2}}{2\lambda}\right)^{2}.$$
Hence, we sometimes  write $H_{\lambda,\mu}$ instead of $H_{c,a}$, when convenient.
 When $\mu=0$, the H{\'e}non map degenerates to 
\begin{equation}
\label{Plambda} H_{\lambda,0}(x,y) = (P_{\lambda}(x),0)\text{, where }P_{\lambda}(x)=x^{2}+\lambda/2-\lambda^{2}/4.
\end{equation}

We say that a dissipative H{\'e}non map $H_{c,a}$ has a {\it semi-Siegel fixed point}  (or simply that $H_{c,a}$ is semi-Siegel) if the  eigenvalues of the linear part of $H_{c,a}$ at that fixed point are $\lambda=e^{2\pi i \theta}$, with $\theta\in(0,1)\setminus \QQ$ and $\mu$, with $|\mu|<1$, and $H_{c,a}$ is locally biholomorphically conjugate to the linear map 
$$L(x,y)=(\lambda x,\mu y).$$
The classic theorem of Siegel \cite{Sie} states, in particular, that $H_{\lambda,\mu}$ is semi-Siegel whenever $\theta$ is Diophantine, that is $q_{n+1}<cq_n^d$, where $p_n/q_n$ are the continued fraction convergents of $\theta$. 
The existence of a linearization is a local result, however, 
in this case there exists a linearizing biholomorphism $\phi:\D\times \C\rightarrow \C^{2}$ sending $(0,0)$ to the semi-Siegel fixed point,
$$H_{\lambda,\mu}\circ \phi=\phi\circ L,$$
such that the image $ \phi(\D\times\C)$ is {\it maximal} (see \cite{MNTU}).
 We call  $ \phi(\D\times\C)$ the {\it Siegel cylinder}; it is a connected component of the interior of $K^+$ and its boundary coincides with $J^+$ (see \cite{BS2}). We let 
$$\Delta=\phi(\D\times\{0\}),$$
and by analogy with the one-dimensional case call it the {\it Siegel disk} of the H{\'e}non map. Clearly, the Siegel cylinder is 
equal to the stable manifold $W^s(\Delta)$, and 
  $\Delta\subset K$ (which is always bounded). 
Moreover,  $\partial \Delta\subset J$, the Julia set of the H{\'e}non map.

\begin{remark}
Let $\textbf{q}$ be the semi-Siegel fixed point of the H{\'e}non map. Then $\Delta\subset W^{c}(\textbf{q})$, the center manifold of $\textbf{q}$ (see e.g. \cite{S} for a definition of $W^c$). The center manifold is not unique in general, but all center manifolds $W^{c}(\textbf{q})$ coincide on the Siegel disk. This phenomenon is nicely
illustrated in \cite{O}, Figure 5.
\end{remark}

% By possibly changing the dynamics on the outside of the Siegel disk, the center manifold can be made homeomorphic to $\R^{2}$ (perhaps $\mathcal{C}^{k}$), but not analytic. 
%This is the main reason why we cannot use unifromization in the study of the Siegel disks for H{\'e}non maps, as is done in the one-dimensional case. 

The main result of this paper is the following theorem:

%\medskip
%\noindent
\begin{thmx}\label{thm:A} There exists $\delta>0$ such that the following holds.
Let $\theta_*=(\sqrt{5}-1)/2$ be the inverse golden mean, $\lambda_*=e^{2\pi i\theta_*}$, and let $|\mu|<\delta$. Then the boundary of the Siegel disk of $H_{\lambda_*,\mu}$ is a homeomorphic image of the circle.
\end{thmx}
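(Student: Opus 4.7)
The plan is to deduce the theorem from the hyperbolicity of the golden-mean renormalization operator $\cR$ on a suitable Banach space $\cB$ of germs of dissipative H\'enon-like maps, which is the central technical input developed in the paper. By that hyperbolicity $\cR$ has a fixed point $F_*$ with a codimension-one local stable manifold $\cW^s \subset \cB$ along which iterates converge exponentially to $F_*$. The first step is to show that for $|\mu| < \delta$ the H\'enon map $H_{\lambda_*,\mu}$, viewed as a H\'enon-like germ near its semi-Siegel fixed point, lies on $\cW^s$. At $\mu = 0$ the map degenerates to $(P_{\lambda_*}, 0)$, and the corresponding one-dimensional statement (that $P_{\lambda_*}$ lies on the stable manifold of the one-dimensional golden-mean cylinder renormalization fixed point) is a theorem of McMullen--Yampolsky; a transversality argument in the perturbation parameter $\mu$ then places the whole one-parameter family on $\cW^s$ for small $\mu$, so that $\cR^n(H_{\lambda_*, \mu}) \to F_*$ exponentially in $\cB$.

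Second, I would analyze the Siegel disk $\Delta_*$ of $F_*$ itself. The linearizing coordinate parametrizes $\Delta_*$ biholomorphically by the unit disk with golden-mean rotation $R_{\theta_*}$; using the self-similar identity $\cR(F_*) = F_*$, one can exhibit $\partial \Delta_*$ as the closure of the orbit of a single combinatorial piece under Fibonacci iterates of $F_*$, each rescaled by the renormalization change of coordinates. This self-similar description, combined with the quasi-conformal compactness built into the definition of $\cR$, yields a continuous injective extension of the linearizing map to the boundary, so that $\partial \Delta_*$ is a topological circle.

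Third, I would transport the parametrization back to $H_{\lambda_*,\mu}$. Exponential convergence $\cR^n(H_{\lambda_*,\mu}) \to F_*$ produces a descending tower of renormalization domains $U_n \subset \C^2$ around the semi-Siegel fixed point $\textbf{q}$ on which $H_{\lambda_*, \mu}^{q_n}$ is, up to exponentially small error, conjugate to $F_*$ on a fixed reference neighborhood; here $p_n/q_n$ are the Fibonacci convergents of $\theta_*$. Pulling the boundary parametrization of $\partial \Delta_*$ back through these near-conjugacies, and then using iterates of $H_{\lambda_*,\mu}$ to propagate the local structure along the orbit of the rotation $R_{\theta_*}$, one constructs a continuous surjection $\Phi: \s^1 \to \partial \Delta$ that semiconjugates $R_{\theta_*}$ to $H_{\lambda_*,\mu}\!\restriction_{\partial \Delta}$.

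The main obstacle is the injectivity of $\Phi$: exponential convergence of renormalizations easily delivers a continuous surjection, but ruling out pinching of $\partial \Delta$ requires quantitative bounded-geometry control on the tower $\{U_n\}$, ensuring that distinct accumulation points of the Fibonacci return orbits remain separated at a definite proportional scale. This is exactly where the smallness hypothesis $|\mu| < \delta$ is used: it forces $H_{\lambda_*,\mu}$ to remain inside a neighborhood of the one-dimensional slice, so the bounded geometry can be upgraded from the known quasi-circle geometry of the Siegel boundary of $P_{\lambda_*}$ by a perturbation argument along the stable manifold $\cW^s$, at which point $\Phi$ becomes a homeomorphism and the theorem follows.
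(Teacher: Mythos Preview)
Your outline has the right skeleton --- reduce to renormalization hyperbolicity, place the H\'enon map on the stable manifold, and read off the boundary structure from the limiting self-similar picture --- but two of the load-bearing steps are misstated.

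First, the transversality argument is in the wrong parameter. You write that ``a transversality argument in the perturbation parameter $\mu$ then places the whole one-parameter family on $\cW^s$''. Since $\cW^s$ has codimension one, a transverse one-parameter family meets it at isolated points, not along an arc. What the paper does is fix $\mu$ small and vary the neutral eigenvalue $l$: the family $l\mapsto H_{l,\mu}$ inherits transversality to $\cW^s$ from the one-dimensional family $l\mapsto P_l$, hence meets $\cW^s$ at some $l_*$. One must then argue \emph{separately} that $l_*=\lambda_*$, i.e.\ that the intersection occurs exactly at the golden-mean rotation number; this is not automatic and is deduced a posteriori from the dynamics on the invariant curve (it is part of Proposition~\ref{prop:gauss}).

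Second, and more seriously, you locate the main obstacle in the injectivity of $\Phi$. In the paper's approach injectivity is essentially free: one builds an invariant curve $\gamma_*$ directly, as a limit of piecewise-constant maps indexed by dynamical partitions (the ``renormalization microscope''), and the uniform contraction estimate on the microscope makes the limit a homeomorphism onto its image by construction. The step that is genuinely hard --- and entirely missing from your proposal --- is the identification $\gamma_*=\partial\Delta$. Nothing in the construction tells you that the abstract invariant circle you have produced is the boundary of the Siegel disk rather than some other invariant set. The paper handles this via a normal-hyperbolicity argument: one shows that inverse branches of the pre-renormalizations preserve and expand a vertical cone field away from the critical point (Proposition~\ref{prop:hyp1}), runs a graph transform to produce a center/weak manifold $W$ through the fixed point, and then uses a case analysis on the type of the fixed point together with L\"owner's lemma to force $W$ to be the Siegel disk with $\gamma_*$ as its boundary. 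Your sketch should replace the ``bounded geometry to get injectivity'' paragraph with this cone-field/center-manifold step.
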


%\medskip
\noindent
By Carath{\'e}odory Theorem, the linearizing map 
\begin{equation}
\label{eq:lin}
\phi:\D\times \{0\}\rightarrow \Delta
\end{equation}
extends continuously and injectively to the boundary. However, we note:

%\medskip
%\noindent
\begin{thmx}\label{thm:B}
The conjugacy $$\phi:S^1\times \{0\}\rightarrow \partial \Delta$$ is not $C^1$-smooth.
\end{thmx}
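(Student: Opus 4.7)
The plan is to assume $\phi|_{S^1 \times \{0\}}$ extends $C^1$-smoothly to the boundary and derive a contradiction from the self-similar geometry of $\partial \Delta$ at the renormalization ``tip.'' If $\phi$ is $C^1$, then $\partial \Delta$ is a $C^1$ Jordan curve carrying a continuous nonvanishing tangent line field; since $\partial\Delta$ is forward invariant and $H_{\lambda_*,\mu}$ acts on it as a $C^1$-conjugate of irrational rotation, $DH_{\lambda_*,\mu}$ preserves this tangent field. Because $\partial\Delta\subset W^{c}(\mathbf{q})$, the tangent at any point lies in the two-real-dimensional center direction, and the non-tangent, contracted direction forms a continuous invariant complement.

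The renormalization machinery underlying Theorem~A supplies a hyperbolic fixed point $H_*$ of the golden-mean Hénon renormalization $\cR$ with $\cR^n(H_{\lambda_*,\mu}) \to H_*$ exponentially. At the distinguished point $p \in \partial\Delta$ about which $\cR$ is centered, unwinding the iterates of $\cR$ produces affine charts $\Psi_n \colon (\C^2,0) \to (\C^2,p)$ whose linear parts are essentially $\mathrm{diag}(\sigma^n,\tau^n)$ with $|\tau|<|\sigma|<1$, satisfying $\Psi_n^{-1} \circ H_{\lambda_*,\mu}^{q_n} \circ \Psi_n \to H_*$ where $q_n$ is the $n$-th Fibonacci number. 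Because $\partial\Delta$ is $H_{\lambda_*,\mu}$-invariant, the rescaled curves $\Psi_n^{-1}(\partial\Delta)$ pointed-Hausdorff converge near $0$ to $\partial\Delta_*$, the boundary of the Siegel disk of $H_*$ at its own tip. A $C^1$ tangent to $\partial\Delta$ at $p$ would force this blow-up limit to be a straight line through $0$ in the center direction.

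The contradiction then reduces to showing that $\partial\Delta_*$ has no tangent at the tip. The equation $\cR H_*=H_*$ encodes an exact self-similarity $\Psi_*^{-1}\circ H_*^{q_1}\circ \Psi_*=H_*$ near the tip, under which $\partial\Delta_*$ is invariant; its central scaling eigenvalue $\sigma_*$ inherits, by continuity in $\mu$ from the one-dimensional Stirnemann--McMullen golden-mean Siegel renormalization, the classical property $\arg\sigma_*\notin\pi\Q$. Iterating the self-similarity then rotates the real tangent direction of $\partial\Delta_*$ at $0$ by $\arg\sigma_*$ at each step, which is incompatible with the existence of a single real tangent direction. The principal technical difficulty is to turn the abstract convergence $\cR^n(H)\to H_*$ into honest affine normal forms at the tip with identifiable scaling eigenvalues $\sigma,\tau$, and to transfer the non-reality of $\arg\sigma_*$ from the one-dimensional limit---a step which should follow from openness of this non-degeneracy condition combined with continuity of the renormalization spectrum as $\mu\to 0$.
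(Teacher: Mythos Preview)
Your route is genuinely different from the paper's, and considerably more delicate than necessary. The paper's argument is a three-line bounded-distortion contradiction: if $\phi$ were $C^1$, then for every arc $J\subset\partial\Delta$ and every $n$ one would have $K^{-1}\diam(J)<\diam(H^n(J))<K\diam(J)$ for a fixed $K$. But the renormalizations $\cRG^l H_{\lambda_*,\mu}$ converge to the pair $\zeta_\lambda$, which has a \emph{critical point} at $0$; hence for a suitable small arc $J_n\subset\partial\Delta$ near the renormalization tip, the return map $H^{q_{n+1}}$ acts like $z\mapsto z^2$ and gives $\diam(H^{q_{n+1}}(J_n))\sim(\diam J_n)^2$. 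That quadratic collapse immediately violates bounded distortion. No scaling eigenvalues, no blow-up limits, no tangent-line arguments are needed---only the presence of the critical point in the limiting one-dimensional pair.

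Your approach has a real gap. The crux of your argument is the claim that $\arg\sigma_*\notin\pi\QQ$ (or, what actually suffices for your line-rotation contradiction, that $\sigma_*\notin\RR$). You call this ``classical'' and attribute it to Stirnemann--McMullen, but neither source proves any such arithmetic property of the scaling constant; they establish existence of the fixed point and geometric convergence, not the argument of $\alpha_*$. Numerically $\alpha_*$ appears non-real, but turning that into a rigorous statement would require a computer-assisted estimate or an independent argument you have not supplied. Your continuity-in-$\mu$ remark does not help, since the property is already unproven at $\mu=0$. By contrast, the paper's argument uses only that $\zeta_\lambda$ has a quadratic critical point, which is part of the definition of the space $\cB(Z,W)$ and requires no numerics.

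A secondary issue: your first step asserts that $C^1$-smoothness of $\phi$ forces $\partial\Delta$ to carry a nonvanishing tangent field. This needs $\phi'\neq 0$ on $S^1$, which is not automatic from $\phi\in C^1$. The paper's bounded-distortion framing sidesteps this as well.
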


It is worthwhile mentioning that if we assume that $\lambda=e^{2\pi i \theta}$, $\mu=e^{2\pi i \theta'}$ and the pair $(\theta,\theta')$ satisfies the two-dimensional Brjuno condition \cite{Brj}, then the conservative H\'enon map $H_{\lambda,\mu}$ has a bounded maximal domain of linearization, called a {\it Siegel ball}.  Herman \cite{He} asked {\it whether the boundary of the Siegel ball is a topological or perhaps a $C^{\infty}$ submanifold of $\C^{2}$}. We answer similar questions, in the dissipative setting, as outlined above. 

%\medskip
%\noindent
%The proofs of above theorems 
The proofs of Theorems \ref{thm:A} and \ref{thm:B}
are based on a renormalization theory for two-dimensional dissipative H{\'e}non-like maps, developed by the first and third authors in \cite{GaYa2}. 
A H\'enon-like map (see \cite{dCLM}) $H:\C^{2}\rightarrow\C^{2}$ can be defined as $H(x,y) = (f(x)+\epsilon(x,y), ax)$, for some small $\epsilon$. In this normalization, it has Jacobian $-a \partial{\epsilon}/\partial{y}$ and it reduces to the standard H\'enon map when $f(x)=x^2+c$ and $\epsilon(x,y) = ay$. In general, the Jacobian of a H\'enon-like map is not constant. Following \cite{LRT}, we say that a H\'enon-like map $H$ has a semi-Siegel fixed point if there exists a local holomorphic change of variables $\phi$ such that $\tilde{H} = \phi\circ H \circ \phi^{-1}$ is a skew product of the form $\tilde{H}(x,y)= (\lambda x, \mu(x) y)$, for some holomorphic function $\mu(x) = \mu + O(x)$, where $\lambda=e^{2\pi i \theta}$, with $\theta\in(0,1)\setminus \QQ$, and $|\mu|<1$. This condition is equivalent to the existence of a one-dimensional Siegel disk  $\Delta = \phi(\D\times\{0\})$. 

Below, we will be using several different renormalization operators. The first of them is 
the renormalization of pairs of two-dimensional dissipative maps  introduced in \cite{GaYa2}. We will recall its definition in \S~\ref{sec:RenACM}.

 In one complex dimension, it corresponds to the renormalization of {\it commuting pairs} $\cR$ (cf. \cite{Stir}). In particular, suppose that
an analytic map $f$  has a fixed Siegel disk $\Delta_f$, with a rotation number $\theta\in(0,1)$. Suppose furthermore,
that $\partial\Delta_f$ is a Jordan curve, and that there is a neighbourhood of $\overline{\Delta_f}$ in which the only critical point of $f$ is a simple critical point $c_f\in\partial\Delta_f$.
The example to keep in mind is a polynomial
$P_\lambda$, defined in (\ref{Plambda}) with $\lambda=e^{2\pi i\theta}$, such that the rotation number $\theta$ is of bounded type \cite{Pet,Yam-bounds}.

Let a number $\theta\in(0,1)$ and denote $\theta_0=\theta,\;\theta_1,\;\theta_2,\ldots$ its orbit under the  Gauss map
$$G(x)=\left\{ \frac{1}{x}\right\};$$
which is finite if
and only if $\theta\in\mathbb Q$.
We denote $r_k$ the integer part $[ 1/\theta_k ]$.
Then the numbers $r_k$ form a finite or infinite continued fraction expansion of $\theta$, which we abbreviate as
$\theta=[r_0,r_1,\ldots].$
As usual, the $n$-th continued fraction convergent of $\theta$ will be denoted by $p_n/q_n\equiv[r_0,\ldots r_{n-1}]$.

The $n$-th {\it pre-renormalization} 
$p\cR^n f$ is the restriction of the pair of iterates $(f^{q_{n+1}},f^{q_n})$ to appropriate  neighborhoods of the critical point $c_f$.
Let $\kappa(z)=\bar z$ denote the complex conjugation, and set
$$\upsilon_n(z)\equiv  (f^{q_n}(c_f)-c_f)\cdot \kappa^{\circ n}(z)+c_f;$$
this is a linear map if $n$ is even, and an anti-linear map if $n$ is odd.
The $n$-th renormalization is obtained by rescaling $p\cR^n f$ by $\upsilon_n$:
$$\cR^nf=(\upsilon_n^{-1}\circ f^{q_{n+1}}\circ\upsilon_n,\upsilon_n^{-1}\circ f^{q_{n}}\circ\upsilon_n).$$

\begin{figure}[htb]
\begin{center}
%\mbox{
%\subfigure{\includegraphics[scale=1]{SDisk1.eps}}
\subfigure{\includegraphics[scale=0.45]{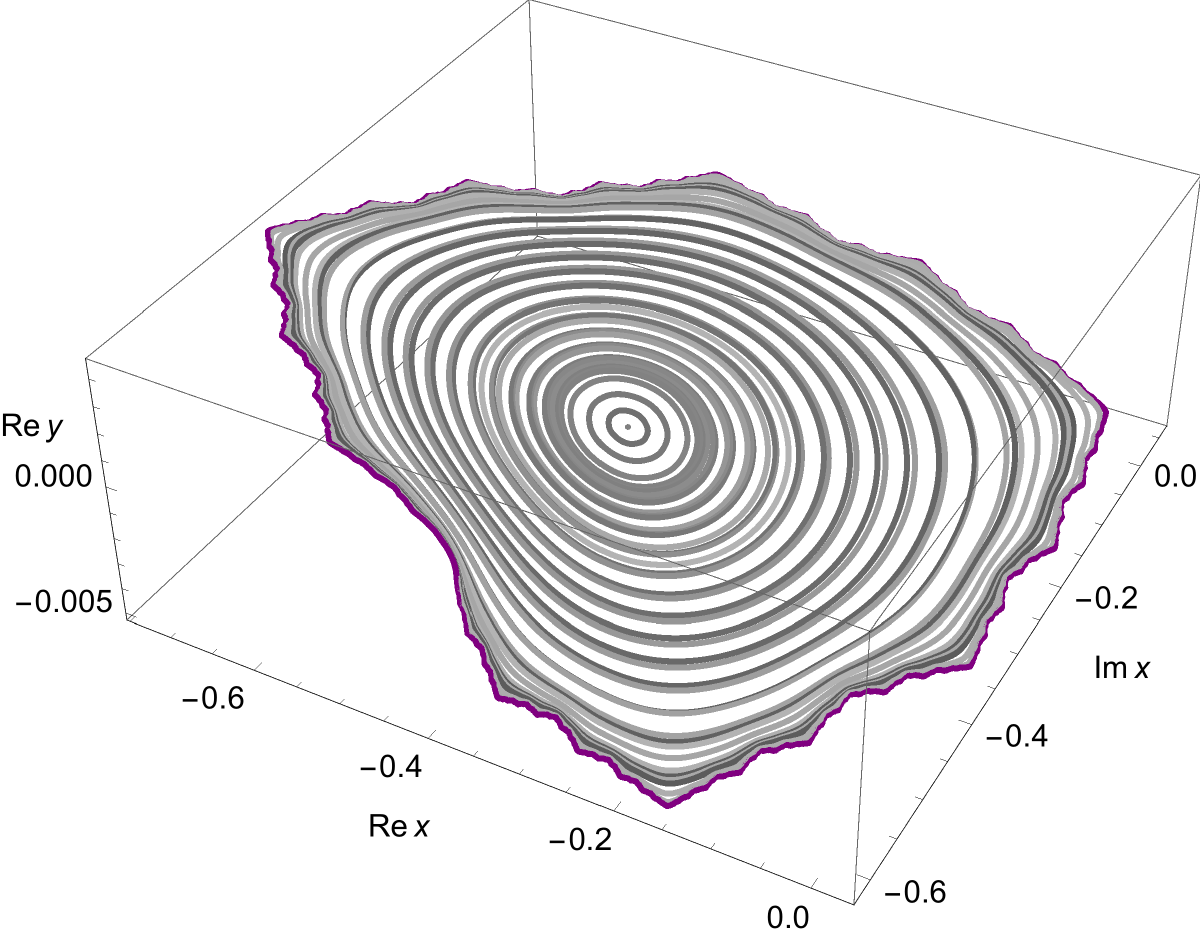}}
%\quad
%\subfigure{\includegraphics[scale=1]{SDisk2.eps}}
\subfigure{\includegraphics[scale=0.45]{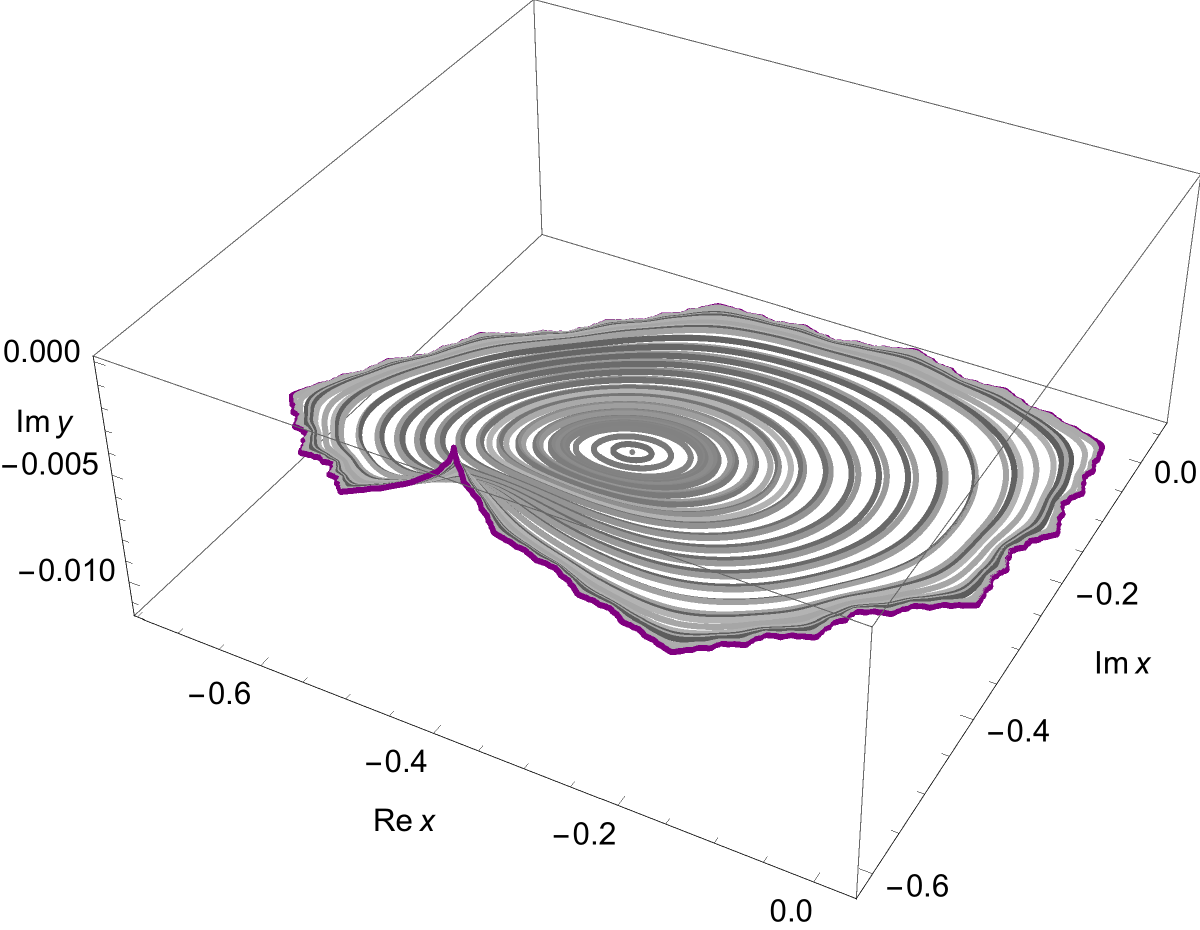}}
%}
\end{center}
\caption{A three dimensional plot of the Siegel disk and its boundary for a H{\'e}non map with a semi-Siegel fixed point with the golden mean rotation number. The parameter $a=0.01+0.01i$. The three axes are as follows: {\sc Top:} $\mathrm{Re}(x)$, $\mathrm{Im}(x)$ and $\mathrm{Re}(y)$; {\sc Bottom:} $\mathrm{Re}(x)$, $\mathrm{Im}(x)$ and $\mathrm{Im}(y)$.\label{fig:disk}
}
\end{figure}

A different take on renormalization of one-dimensional analytic maps with Siegel disks was introduced by the third author in \cite{Ya1} based on the {\it cylinder renormalization operator} $\cren$. This operator acts on analytic maps defined in some neighborhood of a Siegel fixed point, rather than on pairs.
The definition of cylinder renormalization involves a non-linear, rather than linear rescaling of iterates. 
There exists a constant $s\in\NN$ such that the following holds.
Let $f$ be a cylinder-renormalizable analytic map $f$, and 
%For a one-dimensional cylinder-renormalizable analytic map $f$ with a Siegel disk with rotation number $\theta$, denote $g=\cren f$.
%Let
%$q=[1/\rho(g)]$ be the denominator of the first continued fraction approximation of $g$, and consider the commuting pair
%$\zeta=(g^q,g)$. 
%There exist a restriction of $\zeta$  to a neighborhood of the critical point of $f$, and a conformal map $\psi$  such that
%\begin{equation}
%\label{eq:conj}
%\psi\circ\zeta\circ\psi^{-1}=\cR^{s}(f).
%\end{equation}
%Moreover,
denote $(\eta,\xi)=\cR^{s-1}(f)$. Then the cylinder renormalization $\cren(f)$ is obtained by a non-linear rescaling
\begin{equation}
\label{eq:conj2}
\Phi\circ\eta\circ\Phi^{-1}=\cren(f)
\end{equation}
of the map $\eta$ by the uniformizing coordinate $\Phi$ 
of a particular fundamental domain (called a fundamental crescent in \cite{Ya1}) of the map $\xi$. Furthermore, (cf. Proposition~2.11 of \cite{Ya1}), the dependence
$$\xi\mapsto \Phi$$
is locally analytic.

For a topological disk $Z\ni 0$ denote $\cH(Z)$ the Banach space of holomorphic functions $f$ in $Z$ with the uniform norm, and set $\cH(Z,W)\equiv \cH(Z)\times\cH(W)$.
We will typically use the notation $(\eta,\xi)$ for an element of $\cH(Z,W)$.

We let $\cC(Z,W)$ denote the Banach subspace of $\cH(Z,W)$ given by the linear conditions 
$$\eta'(0)=\xi'(0)=0.$$
We say that a pair $(\eta,\xi)\in\cC(Z,W)$ is  {\it almost commuting to order $s\geq 0$} if the following holds:  
\begin{equation}\label{eq:accond}
(\eta \circ \xi)^{(n)}(0)=(\xi \circ \eta)^{(n)}(0), \ 0\leq n\leq s;\; \eta''(0)>0;\;\xi''(0)>0,\;\text{ and }\xi(0)=1.
\end{equation}
In the case $s=2$, we will simply call the pair {\it almost commuting (or a.c.)}.
We denote $\cB(Z,W)$ the subset of $\cC(Z,W)$ consisting of a.c. pairs.
In \cite{GaYa2}, it is shown that  there exists an open neighborhood $\cU$ of  $\cC(Z,W)$ such that $\cB(Z,W)\cap\cU$ is a Banach submanifold of  $\cH(Z,W)$.

Let $\theta$ be periodic under the Gauss map with period $p$, and denote $r_l=[1/G^l(\theta)]$ (these are the digits in the continued fraction expansion of $\theta$,
and $q_{n+1}=r_nq_n+q_{n-1}$). Similarly to the above, for a pair $\zeta=(\eta,\xi)$,  we define a sequence of pre-renormalizations
$$p\cR^n\zeta=\zeta_n=(\eta_n,\xi_n)$$
by $\zeta_0=\zeta$ and $\xi_{n+1}=\eta_n$, $\eta_{n+1}=\eta_n^{r_n}\circ\xi_n$. Renormalizations $\cR^n(\zeta)$ are then defined as
$$\cR^n(\zeta)=(\upsilon_n^{-1}\circ \eta_n\circ\upsilon_n,\upsilon_n^{-1}\circ \xi_n\circ\upsilon_n),\text{ where }\upsilon_n(z)=\xi_n(0)\cdot\kappa(z). $$

McMullen in \cite{Mc} showed that there exists a pair of analytic maps $\zeta_\lambda$ which is periodic under the action of  $\cR$ with period $p$, and such that 
for every $\lambda_1=e^{2\pi i\theta_1}$ where
$$G^m(\theta_1)=\theta\text{, for some }m\geq 0,$$
we have
$$\cR^{np+m}P_{\lambda_1}\to \zeta_\lambda\text{ at a rate, which is geometric in }n.$$

Let $\theta$ and $p$ be as above. Set 
\begin{equation}\label{k-def}
k=p\text{ if }p\text{ is even, or }k=2p\text{ if }p\text{ is odd},
\end{equation}
to guarantee that the operator $\cR^k$ is holomorphic (rather than anti-holomorphic).
Let us say that {\it renormalization hyperbolicity property} ({\bf H}) holds for  $\theta$ if the following is true:
%\begin{itemize}
%\item[({\bf H1})] the pair $\zeta_\lambda$ is a hyperbolic periodic point of  $\rgy$ with one-dimensional unstable direction.
%\end{itemize}

\medskip
\noindent ({\bf H}) \ {\it There exist a pair of topological disks $\tl Z  \Supset Z, \quad \tl W \Supset W$ and $n=m k$, where $m \in \NN$ and $k$ is as in $(\ref{k-def})$ such that
\begin{itemize}
\item[$(i)$]   The operator $\cR^n$ is an analytic operator from an open neighborhood of its fixed point $\zeta_\lambda$ in $\cB(Z,W)$ to $\cB(\tl Z, \tl W)$. 
\item[$(ii)$]  The differential $D \cR^n  \arrowvert_{\zeta_\lambda}$ is a compact linear operator in $T_{\zeta_{\lambda}} \cB(Z,W)$. Let $M\equiv D {\cR^n} \arrowvert_{\zeta_\lambda}.$
 Then $M$ has a single simple eigenvalue outside of the closed unit disk, and the rest of the spectrum of $M$ lies inside the open unit disk.
\end{itemize}
}

We prove a conditional theorem:

%\medskip
%\noindent
\begin{thmx}\label{thm:C} Suppose renormalization hyperbolicity property {\rm ({\bf H})} holds for $\theta$, and let 
$\theta_1$ be such that $G^m(\theta_1)=\theta$ for some $m\in\NN$. Set
$\lambda_1=e^{2\pi i\theta_1}$. Then the following statements hold:
\begin{itemize}
\item[(I)] there exists $\delta>0$ such that if $|\mu|<\delta$ then the map $H_{\lambda_1,\mu}$ lies in the stable set of $\zeta_\lambda$; 
\item[(II)] every H{\'e}non-like map $H$ in $W^s(\zeta_\lambda)$ has a Siegel disk $\Delta_H$ whose boundary is a topological circle;
\item[(III)] the Carath{\'e}odory extension of the linearizing coordinate $\phi$ as in equation (\ref{eq:lin}) to a map $S^1\times\{0\}\to\partial\Delta_H$ is not $C^1$-smooth.
\end{itemize}
\end{thmx}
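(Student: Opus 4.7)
The plan is to address parts (I)--(III) in turn, exploiting hypothesis (\textbf{H}) together with the fact that the degenerate one-dimensional subspace (on which $\mu\equiv 0$) is invariant under $\cR$ and that the restriction there coincides with McMullen's cylinder renormalization of commuting pairs. For \emph{part (I)}, McMullen's theorem cited in the introduction gives $\cR^{np+m}(P_{\lambda_1})\to\zeta_\lambda$ geometrically in one dimension, so $P_{\lambda_1}$ lies in the one-dimensional stable set of $\zeta_\lambda$ and the restriction of $D\cR^n|_{\zeta_\lambda}$ to the degenerate subspace already carries a one-dimensional unstable eigendirection. Hypothesis (\textbf{H})(ii) asserts that $M=D\cR^n|_{\zeta_\lambda}$ has exactly one simple eigenvalue outside the closed unit disk, so this unstable direction must lie entirely inside the degenerate subspace and every tangent vector transverse to it is contracted. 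By the local stable manifold theorem, $W^s_{\mathrm{loc}}(\zeta_\lambda)$ contains a full transverse disk at $\zeta_\lambda$. For $|\mu|<\delta$ with $\delta$ small, $H_{\lambda_1,\mu}$ is an analytic perturbation of $P_{\lambda_1}$ in the transverse (H\'enon) direction; after $N$ preliminary renormalizations, $\cR^N(P_{\lambda_1})$ enters a small neighborhood of $\zeta_\lambda$ inside the one-dimensional stable manifold, and $\cR^N(H_{\lambda_1,\mu})$ tracks it along a purely stable direction. A straightforward inclination-lemma argument then places $\cR^N(H_{\lambda_1,\mu})$ inside $W^s_{\mathrm{loc}}(\zeta_\lambda)$, whence $H_{\lambda_1,\mu}\in W^s(\zeta_\lambda)$.

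For \emph{part (II)}, hypothesis (\textbf{H}) yields geometric convergence $\cR^{kn}(H)\to\zeta_\lambda$ in $\cB(Z,W)$ for every $H\in W^s(\zeta_\lambda)$. Since $\zeta_\lambda$ lies in the degenerate subspace, its Siegel disk is one-dimensional and has a topological-circle boundary by classical Siegel disk theory for quadratic-like commuting pairs with bounded-type rotation number. I would transfer this property to $H$ by constructing, at each renormalization level $n$, a topological annulus $A_n$ around $\partial\Delta_H$ by pulling back a fundamental annulus near the Siegel disk boundary of $\cR^{kn}(H)$ through the semiconjugacies relating consecutive renormalizations. The geometric convergence should yield uniformly bounded moduli of the $A_n$ together with diameters shrinking to zero; their intersection is then a Jordan curve, which must coincide with $\partial\Delta_H$.

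For \emph{part (III)}, argue by contradiction. Suppose $\phi:S^1\times\{0\}\to\partial\Delta_H$ is $C^1$. Then the conjugacy relation $H\circ\phi=\phi\circ R_\theta$ along $\partial\Delta_H$ is $C^1$. This $C^1$ structure, together with uniform distortion bounds, should be preserved under each renormalization step and, by (\textbf{H}), pass to the limit: a $C^1$ conjugacy between the Siegel disk boundary dynamics of $\zeta_\lambda$ and rotation by $\theta$ would exist. But $\zeta_\lambda=(\eta,\xi)\in\cB(Z,W)$ satisfies $\eta'(0)=\xi'(0)=0$ with $\eta''(0)>0$ and $\xi''(0)>0$, so $0$ is a genuine quadratic critical point lying on $\partial\Delta_{\zeta_\lambda}$. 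Differentiating the conjugacy at a point $x_0\in S^1$ with $\phi_{\zeta_\lambda}(x_0)=0$ gives $\phi'_{\zeta_\lambda}(R_\theta x_0)=0$; iterating over the dense $R_\theta$-orbit of $x_0$ forces $\phi'_{\zeta_\lambda}\equiv 0$ on $S^1$, contradicting the fact that $\phi_{\zeta_\lambda}$ is a homeomorphism.

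The main obstacle, I expect, is part (II): translating the Banach-space convergence $\cR^{kn}(H)\to\zeta_\lambda$ into genuine geometric control of the boundary curve $\partial\Delta_H\subset\C^2$, and in particular producing the annuli $A_n$ with uniform modulus bounds. This requires tracking the strong-stable foliation of the Siegel cylinder $W^s(\Delta_H)$ and showing that the renormalization puzzle pieces project consistently through this foliation, a genuinely two-dimensional phenomenon without a direct one-dimensional analogue.
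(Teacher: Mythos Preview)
Part (I) of your plan is close to the paper's, though the paper first shows the family $l\mapsto H_{l,\mu}$ transversally intersects $W^s(\zeta_\lambda)$ at some $l_*$ and only afterwards identifies $l_*=\lambda_1$ by matching continued-fraction digits; your direct placement of $H_{\lambda_1,\mu}$ in $W^s$ is a legitimate shortcut, since the paper separately proves that the unstable eigendirection of $D\cR^n|_{\iota(\zeta_\lambda)}$ lies entirely in the degenerate subspace.

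The real gap is in Part (II). Your annulus scheme presupposes that a Siegel disk $\Delta_H$ already exists and then attempts to control its boundary, but the statement asks you to \emph{produce} the Siegel disk: an arbitrary $H\in W^s(\zeta_\lambda)$ is not a priori known to have a semi-neutral fixed point. The paper inverts the order. It first constructs the invariant Jordan curve $\gamma_*$ purely combinatorially, as the intersection of nested two-dimensional dynamical partitions $\cQ_{ln}$ obtained by composing ``renormalization microscope'' maps $\Psi^{\cRG^l\Sigma}_{\bar w}=(\cRG^l\Sigma)^{\bar w}\circ L_{\cRG^l\Sigma}$ whose derivatives contract uniformly once $\cRG^l\Sigma$ is close to $\zeta_\lambda$; this yields the curve and the conjugacy to rotation simultaneously, with no Siegel disk in sight. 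The substantial second step is to show that $\gamma_*$ bounds a Siegel disk: the paper builds a vertical cone field preserved and expanded by inverse branches away from the critical region, uses this normal hyperbolicity to extend a weak invariant manifold of the fixed point $\mathbf q$ out to $\gamma_*$, and then invokes L\"owner's lemma to rule out the attracting and hyperbolic cases for $\mathbf q$, forcing it to be semi-Siegel. Your bounded-modulus picture is essentially one-dimensional and does not transfer to $\CC^2$, where there is no conformal modulus to bound and the renormalization coordinate changes $H_\Sigma$ are not conformal; the cone-field normal hyperbolicity plus L\"owner argument is the genuine two-dimensional replacement, and it is absent from your outline. For Part (III) the paper avoids your delicate limit-passage by arguing directly on $H$: a $C^1$ boundary conjugacy would force $K^{-1}\diam(J)\le\diam(H^n(J))\le K\diam(J)$ for all arcs $J\subset\gamma_*$ and all $n$, whereas the critical point of $\zeta_\lambda$ gives $\diam(H^{q_{n+1}}(J_n))\sim(\diam J_n)^2$ on suitable small arcs.
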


%\medskip
%\noindent
Our Theorems \ref{thm:A} and \ref{thm:B} will follow from Theorem \ref{thm:C} and the following statement proven in \cite{GaYa2}:

\medskip
\noindent
{\bf Golden-mean renormalization hyperbolicity} \cite{GaYa2}. {\it Renormalization hyperbolicity property} ({\bf H}) {\it holds for $\theta_*=(\sqrt{5}-1)/2.$ }

%!TEX root = main.tex
\medskip
\noindent

\section{Dynamical partitions and multi-indices}\label{sec:partition}
Consider the space $\cI$ of multi-indices $\bar s=(a_1,b_1,a_2,b_2,\ldots,a_m,b_m)$ where $a_j\in \NN$ for $2\leq m$, $a_1\in\NN\cup\{0\}$,
$b_j\in\NN$ for $1\leq j\leq m-1$, and $b_m\in\NN\cup\{0\}$. 
We introduce a partial ordering on multi-indices:
$\bar s\succ \bar t$ if $\bar s=(a_1,b_1,a_2,b_2,\ldots,a_m,b_m)$, $\bar t=(a_1,b_1,\ldots,a_k,b_k,c,d)$, where $k<m$ and 
either $c< a_{k+1}$ and $ d=0$ or $c=a_{k+1}$ and $d< b_{k+1}$.
%For such a pair, we also define
%$$\bar q\equiv \bar s\ominus \bar t:$$
%\begin{itemize}
%\item in the case when $d=0$, $\bar q=(a_{k+1}-c,b_{k+1},\ldots, a_n,b_n)$;
%\item in the other case, $\bar q=(0,b_{k+1}-d,a_{k+2},b_{k+2},\ldots, a_n,b_n).$
%\end{itemize}

For a pair of maps $\zeta=(\eta,\xi)$ and $\bar s $ as above we will denote 
$$\zeta^{\bar s}\equiv\xi^{b_m}\circ\eta^{a_m}\circ\cdots\circ\xi^{b_2}\circ\eta^{a_2}\circ\xi^{b_1}\circ \eta^{a_1}.$$
Similarly, 
$$\zeta^{-\bar s}\equiv (\zeta^{\bar s})^{-1}=(\eta^{a_1})^{-1}\circ(\xi^{b_1})^{-1}\circ\cdots\circ(\eta^{a_m})^{-1}\circ (\xi^{b_m})^{-1}.$$

%Consider a pair $\zeta\in W^s(\zeta_\lambda)\subset \cB(Z,W)$. 

Consider the $n$-th pre-renormalization of $\zeta$:
$$p\cR^n\zeta=\zeta_{n}=(\eta_{n}|_{Z_n},\xi_{n}|_{W_n}),$$
where $Z_{n} = \alpha_{n}(Z)$, $W_{n} = \alpha_{n}(W)$, and 
%Set 
\begin{equation}\label{alpha_n}
\alpha_n(z)=\eta_n(0) z.
\end{equation}

We define  $\bar s_n,\bar t_n\in\cI$ to be such that
$$\eta_{n}=\zeta^{\bar s_n},\text{ and }\xi_{n}=\zeta^{\bar t_n}.$$
A straightforward induction shows:
\begin{lem}
\label{multi1}
Let $\bar r=\bar s_n$ or $\bar t_n$. Write $\bar r=(a_1,b_1,a_2,b_2,\ldots,a_{m_n},b_{m_n})$. Then $b_{m_n}=0$, and either
$$a_{m_n}\geq 2$$
or
$$a_{m_n}=b_{m_n-1}=1.$$
Furthermore, if $\bar s_n$ ends in $\ldots,1,1,0$ then so does $\bar t_n$.
\end{lem}

Let $\tau_{\theta}:\RR\to\RR$ be the translation $x\mapsto x+\theta$, with $\lambda=\exp(2\pi i\theta)$, and $\theta\in(0,1)$. Define
$$f(x)=\tau_{\theta}(x)\text{ and }g(x)=x-1,$$
and set
\begin{equation}
\label{rigid-pair}
I=[-1,0],\;J=[0,\theta],\text{ and }T=(f|_I,g|_J).
\end{equation}
Define $$T_n=(f_n,g_n)=(T^{\bar s_n},T^{\bar t_n}),$$ and set 
$$I_n=[0,g_n(0)],\;J_n=[0,f_n(0)]$$
(the notation $[a,b]$ denotes the interval with endpoints $a$, $b$, not necessarily in that order).

Now consider the collection of intervals
\begin{eqnarray}\label{eq:partition}
\cP_n\equiv \{T^{\bar w}(I_n)\text{ for all }\bar w\prec \bar s_n\text{ and }T^{\bar w}(J_n)\text{ for all }\bar w\prec \bar t_n\}.
\end{eqnarray}
It is easy to see that:
\begin{itemize}
\item[(a)] $\underset{H\in\cP_n}\cup H=I\cup J$;
\item[(b)] for any two distinct elements $H_1$ and $H_2$ of $\cP_n$, the interiors of $H_1$ and $H_2$ are disjoint.
\end{itemize}
In view of the above, we call $\cP_n$ the $n$-th dynamical partition of the segment $I\cup J$.

Consider the sequence of domains $$\cV_n\equiv \{\zeta^{\bar w}(Z_n)\text{ for all }\bar w\prec \bar s_n\text{ and }\zeta^{\bar w}(W_n)\text{ for all }\bar w\prec \bar t_n\}.$$
By analogy with the above definition (and somewhat abusing the notation) we call $\cV_n$ the $n$-th dynamical partition of the pair $\zeta$.

\begin{prop} \label{partition} Suppose, renormalization hyperbolicity property holds for $\theta$, and
$$\zeta\in W^s(\zeta_\lambda),\text{ where }\lambda=e^{2\pi i\theta}.$$
 Then there exists $N=N(\zeta)$, $K>0$,  and $0<\gamma<1$ so that for every $n>N$  the following properties hold.

\begin{itemize}
\item[1)] If $Q_n \in \cV_n$ then $\diam(Q_n)< \gamma^n$.
\item[2)] Any two neighboring domains $Q_n, Q'_n \in \cV_n$ are $K$-commensurable.
\item[3)] For every $\bar w \prec \bar s_n$ (or $\bar w \prec \bar t_n$)  set $\psi_{\bar w}^\zeta=\zeta^{\bar w} \alpha_n$. Then $\|D \psi_{\bar w}^\zeta |_Z \|_\infty<\gamma^n$ (or $\|D \psi_{\bar w}^\zeta |_W \|_\infty<\gamma^n$, respectively).

\end{itemize}
\end{prop}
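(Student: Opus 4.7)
The plan is to combine three ingredients: the a priori bounded geometry of the real rigid-rotation pair $H=(f,g)$, the geometric convergence $\cR^{mk}\zeta\to\zeta_\lambda$ supplied by hyperbolicity (H), and the Koebe distortion theorem applied to univalent extensions of the iterates $\psi_{\bar w}^\zeta=\zeta^{\bar w}\circ\alpha_n$. The guiding observation is that every $\bar w\prec \bar s_n$ parametrizes a branch of the $n$-th level first-return structure of $\zeta$ and thus factors canonically as a composition of pre-renormalized building blocks $\eta_k=\zeta^{\bar s_k}$, $\xi_k=\zeta^{\bar t_k}$ across successive renormalization levels $k\le n$.

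The real side is classical. Since $\theta$ is eventually periodic under the Gauss map, its continued fraction has bounded partial quotients, and the partition $\cP_n$ of $I\cup J$ induced by the rigid rotation $T_{\theta_1}$ has bounded geometry: there is a universal $K>0$ such that adjacent intervals in $\cP_n$ are $K$-commensurable, and each $|T|$ with $T\in\cP_n$ is of order $q_n^{-1}=O(\gamma^n)$ for some $\gamma\in(0,1)$. The hyperbolicity input is the exponential contraction rate $\gamma_0<1$ of $D\cR^{mk}|_{\zeta_\lambda}$ on the stable subspace, together with the extension $\cB(Z,W)\to \cB(\tl Z,\tl W)$ from (H)(i), which allows the pre-renormalizations $\eta_k,\xi_k$ to be analytically continued to the definite larger scaled domains $\tl Z_k=\alpha_k(\tl Z) \Supset Z_k$ and $\tl W_k=\alpha_k(\tl W) \Supset W_k$.

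With these inputs the three estimates assemble as follows. Concatenating the extensions given by (H)(i) along the factorization of $\bar w$ produces a univalent holomorphic extension of $\psi_{\bar w}^\zeta$ from $\tl Z$ (respectively $\tl W$) into a bounded region of $\C^2$, to which Koebe supplies a universal distortion bound. The real trace of $\psi_{\bar w}^\zeta(Z)$ in the center-manifold line is the corresponding element $T\in\cP_n$, which has length $O(\gamma^n)$, so bounded distortion forces $\diam Q_n=O(\gamma^n)$, yielding (1). Commensurability of adjacent $Q_n,Q_n'\in\cV_n$ in (2) follows from commensurability of the corresponding real intervals by the same distortion argument. Finally, (3) is immediate from Koebe: a univalent map from $\tl Z \Supset Z$ whose image has diameter $O(\gamma^n)$ has derivative bounded by $O(\gamma^n)$ on the compactly contained subdisk $Z$, and similarly for $W$.

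The principal obstacle is the complex bounds step, namely producing the uniform univalent extensions of $\psi_{\bar w}^\zeta$. A naive concatenation of Koebe inequalities across the $\sim q_n$ composition factors would degrade the distortion exponentially in $n$; the saving mechanism is that after rescaling each $\eta_k,\xi_k$ is within $O(\gamma_0^k)$ of the corresponding building block of $\zeta_\lambda$, so the distortion defects telescope to a bounded constant. Carrying this out rigorously requires a careful check that the scaled extensions $\alpha_k(\tl Z), \alpha_k(\tl W)$ remain well inside the domains of the intermediate iterates and avoid their critical loci, which is handled by the explicit shape control of $Z, W, \tl Z, \tl W$ established in \cite{GaYa2}.
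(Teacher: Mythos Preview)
Your core mechanism---extracting from (\textbf{H}) that the renormalizations $\cR^n\zeta$ extend to definitely larger domains $\hat Z\Supset Z$, $\hat W\Supset W$, and then invoking Koebe distortion---is exactly what the paper does. The paper's proof is in fact only three sentences: the extension to $\hat Z,\hat W$ for $n\ge N$, Koebe distortion for the branches $\zeta^{-\bar w}$, and ``the claims readily follow.''

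There is, however, a genuine error in your implementation. You apply Koebe to the \emph{forward} maps $\psi_{\bar w}^\zeta=\zeta^{\bar w}\circ\alpha_n$, asserting they admit univalent extensions from $\tl Z$. But $0\in Z_n=\alpha_n(Z)$ and $\eta'(0)=\xi'(0)=0$ by the definition of $\cC(Z,W)$, so already the first factor in $\zeta^{\bar w}$ is two-to-one on $Z_n$; the composition $\psi_{\bar w}^\zeta$ is never univalent, and your promised ``avoid their critical loci'' check must fail at this very first step. The paper sidesteps this by applying Koebe to the \emph{inverse} branches $\zeta^{-\bar w}$: the extension $\eta_n\in\cC(\hat Z,\hat W)$ guarantees that the entire forward orbit $\zeta^{\bar w}(\hat Z_n)$, $\bar w\prec\bar s_n$, stays defined, and the modulus $\mod(\hat Z_n\setminus Z_n)=\mod(\hat Z\setminus Z)$ survives (halved once by the critical step, then preserved by the subsequent univalent steps), so each inverse branch has a definite univalent collar and Koebe applies uniformly in $n$ and $\bar w$.

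Two smaller points. First, the detour through the real partition $\cP_n$ of the rigid rotation is both unnecessary and unjustified as written: $\cP_n$ is defined for the linear pair $H=(f,g)$, not for $\zeta$, and there is no ``real trace of $\psi_{\bar w}^\zeta(Z)$ in the center-manifold line'' equal to an element of $\cP_n$. The size bound $\diam Q_n<\gamma^n$ comes directly from $\diam Z_n\asymp|\alpha_n|$ together with the bounded distortion of $\zeta^{-\bar w}$ and the fact that the orbit domains remain in the fixed bounded region $Z\cup W$. Second, this proposition is purely about one-dimensional pairs $\zeta\in\cB(Z,W)$; your reference to ``a bounded region of $\C^2$'' conflates it with the two-dimensional construction of \S\ref{sec:RenACM}.
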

\begin{proof}
By our assumption, there exists $N>0$ and a pair of domains $\hat Z\Supset Z$ and $\hat W\Supset W$  such that for all $n\geq N$ the 
maps of the pair $\cR^n\zeta\in\cC(\hat Z,\hat W)$.
By Koebe Distortion Theorem, this implies that for all $\bar w \prec \bar s_n$ (or $\bar w \prec \bar t_n$) the branches $\zeta^{-\bar w}$ have 
bounded distortion. The domain $Z_n=\alpha_n(Z)$ has diameter $O(\gamma^n)$. The claims readily follow.
\end{proof}

%!TEX root = main.tex
\medskip
\noindent

\section{Renormalization for  pairs of two-dimensional dissipative maps}\label{sec:RenACM}
This section contains a summary of the extention of the renormalization operator from the space $\cB(Z,W)$ of almost commuting pairs to an appropriately defined space of two-dimensional maps. The details of the procedure can be found in \cite{GaYa3}.

Let $\Omega, \Gamma$ be  domains in $\CC^2$. We denote $O(\Omega,\Gamma)$ the Banach space of bounded analytic functions 
$F=(F_1(x,y),F_2(x,y))$  from $\Omega$ and $\Gamma$ respectively to $\CC^2$ equipped with the norm
\begin{equation}
\label{eq:unormm}\| F\|= \frac{1}{2} \left(   \sup_{(x,y) \in \Omega}|F_1(x,y)|+  \sup_{(x,y)\in \Gamma} |F_2(x,y) | \right).
\end{equation}
 We let $O(\Omega,\Gamma,\delta)$ stand for the $\delta$-ball around the origin in this Banach space.

In what follows, we fix $W$, $Z$, $\tl Z$, and $\tl W$ as in ({\bf H}), and $R>0$ such that $\DD_R \subset Z \cap W$, and let 
$\Omega=Z \times \DD_R$, $\Gamma=W \times \DD_R$.
We select $\hat Z$ and $\hat W$ so that
$$Z \Subset \hat Z \Subset \tl Z,\; W \Subset \hat W \Subset \tl W.$$
We define an isometric embedding $\iota$ of the space $\cH(Z,W)$ into $O(\Omega,\Gamma)$ which send the pair $\zeta=(\eta,\xi)$ to
the  pair of functions $\iota(\zeta)$:
\begin{equation}
\label{eq:embed}
\left(\left(x \atop  y\right)\mapsto \left(\eta(x) \atop \eta(x) \right), 
\left(x \atop  y\right)\mapsto \left(\xi(x) \atop \xi(x) \right)   \right).
\end{equation}

Let $\cU$ be an open neighborhood of $\zeta_\lambda$ as in ({\bf H}) in $\cC(Z,W)$, and let $Q$ be a neighborhood of $0$ in $\CC$.
We will consider an open subset of  $O(\Omega,\Gamma)$ of pairs of maps of the form
\begin{eqnarray}
\label{eq:A} A(x,y)&=&(a(x,y),h(x,y))=(a_y(x),h_y(x)),\\
\label{eq:B} B(x,y)&=&(b(x,y),g(x,y))=(b_y(x),g_y(x)), 
\end{eqnarray}
such that
\begin{itemize}
\item[1)] the pair $(a(x,y),b(x,y))$ is in a $\delta$-neighborhood  of  $\cU$ in $O(\Omega,\Gamma)$,
\item[2)]  $(h,g) \in O(\Omega,\Gamma)$ are such that  $|\partial_x h(x,0)|>0$ and  $|\partial_x g(x,0)|>0$ whenever $x \notin \bar Q$,  and 
$$(h(x,y)-h(x,0),g(x,y)-g(x,0)) \in O(\Omega,\Gamma,\delta).$$ 
\end{itemize}
This open subset of $O(\Omega,\Gamma)$ will be denoted $\cA(\cU,Q,\delta)$ for brevity.

We say that a pair $(A,B)$ is a pre-renormalization of a map $H$:
$$(A,B)=p\cR^nH$$ if 
$$A=H^{q_n},\;B=H^{q_{n+1}}\text{ for some }n\geq 0.$$

%Given a pair $\Sigma=(A,B)$ as in $(\ref{eq:A})-(\ref{eq:B})$ we set 
%$$\cL \Sigma \equiv (a(x,0),b(x,0)).$$

\subsection{Defining renormalization: coordinate transformations}

Let $(\eta,\xi) \in \cB(Z,W)$ be $n\geq 2$ times renormalizable,  and consider its $n$-th pre-renormalization written as
$$p \cR^n\zeta=\left( \zeta^{\bar s_n}, \zeta^{\bar t_n } \right).$$
Let $\bar s_n$ be given by $(a_1,b_1,a_2,b_2,\ldots,a_{m_n},0)$ (recall Lemma~\ref{multi1}).
We denote 
\begin{eqnarray}
\nonumber \hat s_n&=&\left\{ (a_1,b_1,a_2,b_2,\ldots,a_{m_n}-2,0), \ a_{m_n} \ge 2  \atop (a_1,b_1,a_2,b_2,\ldots,0,0,0), \ a_{m_n}=1 \right. ,\\
\nonumber \phi_0(x)&=&\left\{ \eta^2, \ a_{m_n} \ge 2  \atop \eta \circ \xi, \ a_{m_n}=1 \right. .
\end{eqnarray}
Define $\hat t_{n}$ in an identical way to $\hat s_{n}$ (see Lemma~\ref{multi1}). Then $p \cR^n\zeta$ can be written as
$$ p \cR^n\zeta=\phi_0 \circ  \left(\zeta^{\h s_n}, \zeta^{\h t_n } \right).$$
 For a  sufficiently large $n$, the function $\eta^{-1}$ is a diffeomorphism of the neighborhood $\alpha_n (Z \cup W)$, and one can define the $n$-th pre-renormalization of $\zeta$ in $\eta^{-1}(\alpha_n (Z \cup W))$ as 
$$
\h p \cR^n\zeta=\left(\eta^{-1} \circ  \zeta^{\bar s_n} \circ \eta, \eta^{-1} \circ \zeta^{\bar t_n } \circ \eta \right)=\left( f \circ   \zeta^{\h s_n} \circ \eta, f \circ \zeta^{\h t_n } \circ \eta \right),
$$
where $f=\eta$ if $a_n \ge 2$ and $f=\xi$ if $a_n=1$.

Next, suppose that $\Sigma=(A,B)$ lies in $\cA(\cU,Q,\delta)$ with $\cU$ and $\delta$ sufficiently small, so that 
the following pre-renormalization  is defined in a neighborhood of $\eta^{-1}(\alpha_n (Z \cup W))\times \{ 0\}$:
$$\h p \cR^n \Sigma = \left(F \circ \Sigma^{\h s_n} \circ A, F \circ \Sigma^{\h t_n} \circ A \right),$$
where $F=A$ if $a_n \ge 2$ and $F=B$ if $a_n=1$.

We will denote $$\pi_1(x,y)=x\text{ and }\pi_2(x,y)=y.$$

Set
$$\phi_y(x)=\phi(x,y) :=\left\{ \pi_1 A^2(x,y), \ a_n \ge 2  \atop \pi_1 A \circ B(x,y), \ a_n=1 \right.$$

For sufficiently small $\delta$, the map $\phi_z$ is close to $\phi_0$ and is a diffeomorphism of a neighborhood of $\pi_1 \Sigma^{\h s_n}(\alpha_n(Z),0) \approx \zeta^{\h s_n}(\alpha_n (Z))$ for all $z \in \DD_R$ for some  $R=R(\delta)>0$.  Similarly, $g_z$ is a diffemorphism of  a neighborhood of $\pi_1  \Sigma^{\h s_n}(\alpha_n (Z),0)$ for all $z \in \DD_R$ for some $R=R(\delta)>0$.

Furthermore, set
$$q_z(x) \equiv q(x,z)=\pi_2 F(x,z)= \left\{ h_z(x), \ a_n \ge 2  \atop g_z(x), \ a_n=1 \right.$$
According to our definition of the class  $\cA(\cU,Q,\delta)$, this is a diffeomorphism outside a neighborhood of zero. Also, set
\begin{equation}
\nonumber w_z(x) \equiv w(x,z) := q_{z}\left(\phi_{z}^{-1}( x) \right),
\end{equation} 
a diffeomorphism of a neighborhood of   $\pi_1 \phi_z \circ \Sigma^{\h s_n}(\alpha_n (Z),0)$ in $\CC^2$ onto its image for all $z \in \DD_R$ for some  $R=R(\delta)>0$.  Notice, that $\partial_z  w_z(x)$ and $\partial_z w^{-1}_z(x)$ are functions whose uniform norms are $O(\delta)$.

\noindent
Define the following transformation:
\begin{equation}\label{eq:Htransform}
H_{\Sigma}(x,y) = (a_y(x),w_{q_0^{-1}(y)}^{-1}(y)),
\end{equation}
This transformation is $\delta$-close to $\left(\eta(x),\phi_0(q_0^{-1}(y)) \right)$ in $O(\Omega,\Gamma)$, and therefore, for small $\delta$,  is a diffieomorphism of a neighborhood of $\pi_1  F \circ \Sigma^{\h s_n}(\alpha_n (Z),0) \approx f(\zeta^{\h s_n}(\alpha_n (Z)))$ onto its image.  In particular, 
\begin{equation}
\label{eq:AHinv} A \circ H_{\Sigma}^{-1}(x,y)=(x,h(\eta^{-1}(x),y))+O(\delta).
\end{equation}

We use $H_{\Sigma}(x,y)$ to pull back $\hat p \cR^n \Sigma$ to a neighborhood of definition of the $n$-th pre-renormalization of a pair $(\eta,\xi)$ - that is, a neighborhood of  $\alpha_n (Z \cup W)$ in $\CC^2$:
$$p \cR^n \Sigma=(\bar A, \bar B) = H_{\Sigma}\circ F \circ \left( \Sigma^{\h s_n},  \Sigma^{\h t_n} \right) \circ A \circ H_{\Sigma}^{-1}(x,y).$$

The following has been proved in \cite{GaYa2}.
\begin{lem}
\label{lem-preren}
There exists an $n \in \NN$, and a choice of $\cU$, $Q$, $\delta_0$ and $C>0$ such that the following holds. For every $\delta<\delta_0$ and every $\Sigma\in\cA(\cU,Q,\delta)$ the pair $p\cR^n\Sigma$ is defined, lies in $O(\hat\Omega,\hat\Gamma)$, $\hat \Omega= \hat Z \times \DD_R$,  $\hat \Gamma= \hat W \times \DD_R$, and 
$$\dist(p \cR^n \Sigma, \iota(\cH(\alpha_n(\hat Z),\alpha_n(\hat W))))<{C \delta (\|  \pi_1 \Sigma-\pi_2 \Sigma\|+\delta   )}.$$
\end{lem}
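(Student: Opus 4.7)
The lemma makes three claims: that $p\cR^n\Sigma$ is defined, lies in $O(\hat\Omega,\hat\Gamma)$, and is close to $\iota(\cH)$ with the bilinear error $C\delta(\|\pi_1\Sigma-\pi_2\Sigma\|+\delta)$. The overall strategy is to reduce to the 1D operator $p\cR^n$ (which is controlled by hypothesis (\textbf{H})) and then treat the full 2D construction as a perturbation. By (\textbf{H})(i) we can fix $n=mk$ so that $\cR^n$ is analytic on an open neighborhood of $\zeta_\lambda$ in $\cB(Z,W)$ and maps into $\cB(\tl Z,\tl W)$; in particular the 1D pre-renormalization $p\cR^n$ is defined on a neighborhood $\cU$ of $\zeta_\lambda$, with $\alpha_n(\hat Z)\Subset\alpha_n(\tl Z)$ and $\alpha_n(\hat W)\Subset\alpha_n(\tl W)$. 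For $\Sigma\in\cA(\cU,Q,\delta)$ with $\delta<\delta_0$ sufficiently small, the horizontal slice $\cL\Sigma=(a(\cdot,0),b(\cdot,0))$ lies within $\delta$ of $\cU$ in $\cH(Z,W)$. Continuity of composition in $O(\Omega,\Gamma)$, together with the condition that $|\partial_x h(\cdot,0)|, |\partial_x g(\cdot,0)|>0$ off $\bar Q$, then ensures that every intermediate object $\Sigma^{\hat w}$, $\phi_z$, $q_z$, and $w_z$ used to build $H_\Sigma$ and $p\cR^n\Sigma$ is defined on the required domain and the final pair indeed lies in $O(\hat\Omega,\hat\Gamma)$.

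For the distance estimate, a pair in $O(\hat\Omega,\hat\Gamma)$ lies in $\iota(\cH(\alpha_n(\hat Z),\alpha_n(\hat W)))$ exactly when its two coordinate projections coincide and depend only on $x$. The natural candidate approximant to $p\cR^n\Sigma$ is therefore $\iota(p\cR^n\cL\Sigma)$, and I would bound
$$\|p\cR^n\Sigma - \iota(p\cR^n\cL\Sigma)\|.$$
The crucial structural fact is that when $\Sigma\in\iota(\cC(Z,W))$, i.e.\ when $\pi_1\Sigma=\pi_2\Sigma$ and both $h,g$ are $y$-independent, the entire construction collapses to $\iota$ applied to the 1D pre-renormalization of $\cL\Sigma$, so the distance is exactly zero. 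Both deviations are quantitatively measured by the defining data of $\cA(\cU,Q,\delta)$: $\|\pi_1\Sigma-\pi_2\Sigma\|$ captures the mismatch between the two coordinate projections, while $\delta$ bounds the $y$-dependence of $h,g$. Using (\ref{eq:AHinv}) and the analogous identity for $B\circ H_\Sigma^{-1}$, I would expand $H_\Sigma$, $H_\Sigma^{-1}$, and each iterate $\Sigma^{\hat w}$ to first order in $y$ and in $\pi_1-\pi_2$, and show inductively that the corrections to $\iota(p\cR^n\cL\Sigma)$ arise only through products in which one factor is of order $\delta$ (coming from $y$-derivatives of $h,g$, equivalently from $w_z-w_0$) and the other of order $\|\pi_1\Sigma-\pi_2\Sigma\|+\delta$. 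This would yield the claimed bilinear bound.

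The main obstacle is precisely the extraction of this prefactor $\delta$: a naive linearization produces only the weaker estimate $C(\|\pi_1\Sigma-\pi_2\Sigma\|+\delta)$. The saving cancellation expresses the fact that the differential of the 2D pre-renormalization along the normal to $\iota(\cH)$ vanishes at $\delta=0$, and must be uncovered by propagating (\ref{eq:AHinv}) through the long composition $H_\Sigma\circ F\circ(\Sigma^{\hat s_n},\Sigma^{\hat t_n})\circ A\circ H_\Sigma^{-1}$. The key point to exploit is that $H_\Sigma$ is engineered so that $A\circ H_\Sigma^{-1}(x,y)=(x,h(\eta^{-1}(x),y))+O(\eps)$; this design forces any residual mismatch between the two coordinates, and any residual $y$-dependence, to carry an extra factor of the $y$-amplitude $\delta$ before being fed into subsequent iterates. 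Because $n$ is fixed and the combinatorics are controlled by (\textbf{H}), the iterated-branch estimates needed to close the argument follow from bounded-distortion considerations in the spirit of Proposition~\ref{partition}.
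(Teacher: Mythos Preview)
The paper does not prove this lemma here; immediately before the statement it says ``The following has been proved in \cite{GaYa2}'' and gives no argument. So there is no in-paper proof to compare your plan against, and what follows is an assessment of your outline on its own merits.

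Your plan is broadly on target and, crucially, identifies the real difficulty correctly: a naive linearization yields only $C(\|\pi_1\Sigma-\pi_2\Sigma\|+\delta)$, and the extra prefactor $\delta$ must come from a cancellation built into $H_\Sigma$. Your pointer to (\ref{eq:AHinv}) as the mechanism is also right --- the whole purpose of the change of variables $H_\Sigma$ is to make the first coordinate of $A\circ H_\Sigma^{-1}$ equal to $x$ up to $O(\delta)$, so that $y$-dependence enters subsequent compositions only through second-coordinate slots and therefore carries the weight $\delta$. The part of your plan that is still a handwave is the inductive propagation through the long words $\Sigma^{\hat s_n}$, $\Sigma^{\hat t_n}$: you need to control, step by step, how an $O(\delta)$ perturbation in the second coordinate contaminates the first under repeated application of $A$ and $B$, and to verify that no stage loses the prefactor. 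Invoking ``bounded-distortion considerations in the spirit of Proposition~\ref{partition}'' is not quite the right tool --- that proposition is about Koebe distortion for the limiting one-dimensional pair, whereas what is needed here is a direct chain-rule/Lipschitz estimate for the two-dimensional iterates on $\Omega\cup\Gamma$. This is exactly the computation carried out in \cite{GaYa2}; your plan would become a proof once that step is made explicit.
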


Let us write
\begin{equation}
\bar{A}(x,y)=\left( { \bar \eta_1(x)+\bar \tau_1(x,y) \atop   \bar \eta_2(x)+\bar \tau_2(x,y) } \right),
\end{equation}
where 
$$\bar \eta_1(x) \equiv \pi_1 \bar{A}(x,0), \quad  \bar \eta_2(x) \equiv \pi_2 \bar{A}(x,0)$$
are $O({ \delta \|  \pi_1 \Sigma-\pi_2 \Sigma\|+\delta^{2}   })$-close to each other, and both are $\delta$-close to $\pi_\eta p \cR^n \zeta=\zeta^{\bar s_n}$, where  $\pi_\eta$ and $\pi_\xi$ are the projections on, correspondingly, the first and the second map in a pair, and 
$$\bar \tau_1(x,y) \equiv  \pi_1 \bar{A}(x,y)- \pi_1 \bar{A}(x,0), \quad \bar \tau_2(x,y)= \pi_2 \bar{A}(x,y)- \pi_2 \bar{A}(x,0),$$
are functions whose norms are $O(\delta^2)$. Similarly,
$$
\nonumber \bar{B}(x,y)=\left( { \bar \xi_1(x)+\bar \pi_1x,y) \atop   \bar \xi_2(x)+\bar \pi_2(x,y) } \right),
$$
where
$$\bar \xi_1(x) \equiv \pi_1 \bar{B}(x,0), \quad  \bar \xi_2(x) \equiv \pi_2 \bar{B}(x,0)$$
are $O({\delta \|  \pi_1 \Sigma-\pi_2 \Sigma\|+\delta^{2}   })$-close to each other, and both are $\delta$-close to $\pi_\xi p \cR^n \zeta=\zeta^{\bar t_n}$, and 
$$\bar \pi_1(x,y) \equiv  \pi_1 \bar{B}(x,y)- \pi_1 \bar{B}(x,0), \quad \bar \pi_2(x,y)= \pi_2 \bar{B}(x,y)- \pi_2 \bar{B}(x,0),$$
are functions whose norms are $O(\delta^2)$.

\subsection{Defining renormalization: critical projection} 
 By the Argument Principle, 
if $\delta$ is sufficiently small, then 
%functions $\eta_1$, $\eta_2$, $\xi_1$ and $\xi_2$ have unique quadratic critical points in a small neighborhood of $0$ in $\CC$. In particular, 
the function $\pi_1 \bar B \circ \bar A (x,0)$ has a unique critical point $c_1$ in a neighborhood of $0$. Set $T_1(x,y)=(x+c_1,y)$, then
$$\partial_x \left(\pi_1 T_1^{-1} \circ \bar B \circ \bar A \circ T_1  \right)(0,0)=0.$$ Similarly, if $\delta$ is sufficiently small, the function $\pi_1 T_1^{-1} \circ \bar A \circ \bar B \circ T_1(x,0)$ has a unique critical point $c_2$ in a neighborhood of $0$.  Set $T_2(x,y)=(x+c_2,y)$, then
$$\partial_x \left(\pi_1 T_2^{-1} \circ T_1^{-1} \circ \bar A \circ \bar B \circ T_1  \circ T_2 \right)(0,0)=0.$$

We now set
\begin{eqnarray}
\nonumber \Pi_1(\bar A, \bar B)&=&(\tilde A, \tilde B):=(T_2^{-1} \circ T_1^{-1} \circ \bar A \circ T_1, T^{-1}_1 \circ \bar B \circ T_1 \circ T_2)\\
\nonumber &=& \left( \left( \tilde \eta_1(x) + \tilde \tau_1(x,y) \atop   \tilde \eta_2(x) + \tilde \tau_2(x,y)  \right), \left( \tilde \xi_1(x) + \tilde \pi_1(x,y) \atop   \tilde \xi_2(x) + \tilde \pi_2(x,y)  \right)   \right),
\end{eqnarray}
where the norms of the functions $\tilde \tau_k$, $\tilde \pi_k$, $k=1,2$, are $O(\delta^2)$. 

The critical points  of the functions $\pi_1 (\bar A \circ \bar B)(x,0)$ and $\pi_1 (\bar B \circ \bar A)(x,0)$ are $O\left( \delta \|\pi_1 \Sigma -\pi_2 \Sigma \|  +\delta^{2} \right)$-close to each other, and therefore, 
\begin{equation}
\label{T2small} T_2=\text{Id}+O\left( \delta  \|\pi_1 \Sigma -\pi_2 \Sigma \|  +\delta^{2} \right).
\end{equation}

Let us set 
$$\tl\Sigma=(\tl A,\tl B)=\Pi_1 p \cR^n\Sigma.$$
We note that if the maps $\bar A$ and $\bar B$ commute, than the critical point of $\pi_1 T_1^{-1} \circ \bar A \circ \bar B \circ T_1(x,0)$ is at $0$. We, therefore, have the following
\begin{prop}
Suppose $(A,B)$ is a pre-renormalization of a map $H$. Then the map $T_2\equiv \text{Id}$, and hence, the projection $\Pi_1$ is a conjugacy by $T_1$.
\end{prop}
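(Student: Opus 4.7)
The plan is to exploit the fact that when $\Sigma=(A,B)=(H^{q_n},H^{q_{n+1}})$, the pair $(\bar A,\bar B)$ inherits commutativity from $A$ and $B$, which will force the correction $T_2$ to be trivial by uniqueness of the critical point.

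First I would unpack $\bar A$ and $\bar B$ from the definition in the preceding subsection:
\[
\bar A=H_\Sigma\circ F\circ\Sigma^{\hat s_n}\circ A\circ H_\Sigma^{-1},\qquad \bar B=H_\Sigma\circ F\circ\Sigma^{\hat t_n}\circ A\circ H_\Sigma^{-1},
\]
where $F\in\{A,B\}$ and the two components are conjugated by the same map $H_\Sigma$. Since $A=H^{q_n}$ and $B=H^{q_{n+1}}$, every composition of $A$'s and $B$'s is an iterate of $H$; in particular, the central words $F\circ\Sigma^{\hat s_n}\circ A$ and $F\circ\Sigma^{\hat t_n}\circ A$ equal $H^{k_1}$ and $H^{k_2}$ for some positive integers $k_1,k_2$. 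After conjugation by $H_\Sigma$, these become two iterates of the map $H_\Sigma\circ H\circ H_\Sigma^{-1}$, and therefore $\bar A\circ\bar B=\bar B\circ\bar A$.

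Next I would translate this commutativity into the desired conclusion about $c_2$. By the definition of $T_1(x,y)=(x+c_1,y)$, the point $0$ is a critical point of $x\mapsto\pi_1(T_1^{-1}\circ\bar B\circ\bar A\circ T_1)(x,0)$, and for sufficiently small $\delta$ it is the unique one in the relevant neighborhood. Because $T_1$ only translates the first coordinate and, in particular, preserves the slice $\{y=0\}$, and because $\bar A\circ\bar B=\bar B\circ\bar A$ as maps, the two functions
\[
x\mapsto\pi_1(T_1^{-1}\circ\bar A\circ\bar B\circ T_1)(x,0) \text{ and } x\mapsto\pi_1(T_1^{-1}\circ\bar B\circ\bar A\circ T_1)(x,0)
\]
coincide identically. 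Hence the former also has its unique critical point at $0$, which means $c_2=0$ and $T_2=\mathrm{Id}$. Substituting this into the defining formula
\[
\Pi_1(\bar A,\bar B)=\bigl(T_2^{-1}\circ T_1^{-1}\circ\bar A\circ T_1,\;T_1^{-1}\circ\bar B\circ T_1\circ T_2\bigr)
\]
yields $\Pi_1(\bar A,\bar B)=(T_1^{-1}\circ\bar A\circ T_1,\,T_1^{-1}\circ\bar B\circ T_1)$, which is precisely conjugation of the pair by $T_1$.

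There is no serious analytical obstacle here; the statement is essentially a bookkeeping consequence of commutativity of iterates. The only point requiring care is the observation that the ambient coordinate change $H_\Sigma$ and the ``centering'' map $F$ are identical on both sides of the pre-renormalized pair, so that conjugation of commuting iterates of $H$ by them does not destroy commutativity---this is built into the construction recalled above.
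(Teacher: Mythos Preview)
Your argument is correct and follows the same route as the paper: when $(A,B)$ are iterates of a single map $H$, the pre-renormalized pair $(\bar A,\bar B)$ commutes, so the two centered compositions coincide on the slice $\{y=0\}$ and share the unique critical point at $0$, forcing $c_2=0$ and $T_2=\mathrm{Id}$. The paper states this in one line just before the proposition; your version simply makes the bookkeeping explicit.
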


\subsection{Defining renormalization: commutation projection} At the next step we will project the pair $(\tilde A, \tilde B)$ onto the subset of pairs satisfying the following almost commutation conditions:
\begin{eqnarray} \label{commutation}
\partial_x^i \pi_1(\tilde A \circ \tilde B(x,0) - \tilde B \circ \tilde A(x,0))\arrowvert_{x=0}&=&0, \quad i=0,2 \\
\label{normalization} \pi_1 \tilde B(0,0)&=&1.
\end{eqnarray}
To that end we set 
$$\Pi_2 (\tilde A,\tilde B)(x,y)=\left( \left( \tilde \eta_1(x) +a x^4+b x^6 + \tilde \tau_1(x,y) \atop   \tilde \eta_2(x) +a x^4+b x^6+ \tilde \tau_2(x,y)  \right), \left( \tilde \xi_1(x)+c  + \tilde \pi_1(x,y) \atop   \tilde \xi_2(x) +c +\tilde \pi_2(x,y)  \right)   \right),$$
and require that $(\ref{commutation})$ and $(\ref{normalization})$ are satisfied for maps in the pair $\Pi_2 (\tilde A,\tilde B)(x,y)$. The following Proposition is proved in \cite{GaYa2}.
\begin{prop}\label{prop:2Dprojection}
There exists $\rho>0$ such that for all $\tl\Sigma$ in the $\rho$-neighborhood of $$\iota(\cC(\alpha_n(\hat Z),\alpha_n(\hat W)))$$
there is a unique tuple $(a,b,c,d)$ such that the pair $ \Pi_2 (\tilde A,\tilde B)$ satisfies the equations $(\ref{commutation})$ and $(\ref{normalization})$. Moreover, in this neighborhood, the dependence of $\Pi_2$ on $\Sigma$ is analytic.  
Furthermore, if $A\circ B=B\circ A$, then 
$\Pi_2=\text{Id}.$

%Finally, if $$\Sigma=\left( \zeta \atop \zeta \right) \equiv \left(\left(\eta \atop \eta  \right), \left(\xi \atop \xi  \right)  \right) \in \iota(\cM(U,V)), $$ then 
%$$\Pi_2 (\tl A,\tl B)=\left(\cP \zeta \atop \cP \zeta  \right),$$
%where $\cP$ has been defined in (\ref{Pproj}). 
\end{prop}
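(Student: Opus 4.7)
The plan is to apply the analytic Implicit Function Theorem to solve for the parameters in the formula defining $\Pi_2$ as analytic functions of $\tilde\Sigma$. Only three parameters $a,b,c$ appear in the displayed formula, so I treat the stated tuple $(a,b,c,d)$ as $(a,b,c)$ matched against the three scalar conditions in $(\ref{commutation})$--$(\ref{normalization})$. Define
\[
F(a,b,c,\tilde\Sigma) = \begin{pmatrix} \pi_1\tilde B(0,0)-1 \\ \pi_1(\tilde A\circ \tilde B - \tilde B\circ \tilde A)(0,0) \\ \partial_x^2\,\pi_1(\tilde A\circ \tilde B - \tilde B\circ \tilde A)(x,0)\big|_{x=0} \end{pmatrix},
\]
with $\tilde A,\tilde B$ understood as the perturbed pair prescribed by $\Pi_2$. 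Solving $F=0$ is exactly the desired condition. At the reference point $\tilde\Sigma_0=\iota(\zeta_\lambda)$ and $(a,b,c)=0$, the pair is already almost commuting and normalized, so $F=0$ by $(\ref{eq:accond})$.

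The crux is invertibility of $D_{(a,b,c)}F|_0$. I would compute the three columns explicitly, using $\xi(0)=1$, $\eta'(0)=\xi'(0)=0$, $\xi''(0)>0$, and the Taylor expansion $\pi_1\tilde B(x,0) = 1+\tfrac{\xi''(0)}{2}x^2+O(x^3)$. The $c$-column works out to $(1,\,\eta'(1)-1,\,\eta''(1)\xi''(0))^T$. For $a$ and $b$, the $\tilde A\circ\tilde B$ side contributes $\pi_1\tilde B(0,0)^{k}=1$ to $F_1$ and $\partial_x^2[\pi_1\tilde B(x,0)^k]|_{x=0}=k\,\xi''(0)$ to $F_2$, where $k=4,6$ respectively, while the $\tilde B\circ\tilde A$ side contributes zero at $x=0$ since $x^k$ and its second derivative both vanish there for $k\geq 4$. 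The resulting Jacobian
\[
J=\begin{pmatrix} 0 & 0 & 1 \\ 1 & 1 & \eta'(1)-1 \\ 4\xi''(0) & 6\xi''(0) & \eta''(1)\xi''(0) \end{pmatrix}
\]
has determinant $2\xi''(0)\neq 0$, the non-vanishing guaranteed by $\xi''(0)>0$ in $(\ref{eq:accond})$.

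With non-degeneracy in hand, the analytic IFT in Banach spaces delivers the required $\rho>0$ and the analytic selection $(a,b,c)(\tilde\Sigma)$ on a $\rho$-neighborhood of $\iota(\cC(\alpha_n(\hat Z),\alpha_n(\hat W)))$, giving both existence and uniqueness. For the commuting case, if $A\circ B=B\circ A$ then the two compositions coincide identically, so $F_1$ and $F_2$ vanish for $(a,b,c)=(0,0,0)$; the normalization $\pi_1\tilde B(0,0)=1$ is built into the definition of the pre-renormalization via the scaling $\alpha_n$ from $(\ref{alpha_n})$. Uniqueness then forces $(a,b,c)=(0,0,0)$, and $\Pi_2=\mathrm{Id}$ follows.

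The main obstacle will be the bookkeeping in the linearization, specifically showing that the two polynomial perturbations $ax^4$ and $bx^6$ generate linearly independent contributions to the $F_2$ column. The key structural reason is the distinct binomial factors $4\neq 6$ produced by expanding $(1+O(x^2))^{k}$ together with the cancellation $\xi'(0)=0$ that suppresses the competing $(B_1')^2$ terms. Without this clean separation the projection would fail to be well defined precisely at the fixed point where it is most needed; every other step is routine once the $3\times 3$ matrix above is shown to be invertible.
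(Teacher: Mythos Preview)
The paper does not prove this proposition here; it is deferred to \cite{GaYa2}, so there is no in-paper argument to compare against. Your Implicit Function Theorem approach is the natural one, and your Jacobian matrix is correct (with determinant $2\xi''(0)\neq 0$ at $\iota(\zeta_\lambda)$). One cosmetic slip: in your narrative you say the contributions $\pi_1\tilde B(0,0)^k=1$ and $k\,\xi''(0)$ go into ``$F_1$'' and ``$F_2$,'' but with your own ordering these are the entries of $F_2$ and $F_3$ (the $i=0$ and $i=2$ commutation conditions); the matrix you display is nonetheless right. Note also that the statement is phrased for a neighborhood of all of $\iota(\cC(\alpha_n(\hat Z),\alpha_n(\hat W)))$, not just of the fixed point; at a general point of $\cC$ one only has $\eta'(0)=\xi'(0)=0$, not $\xi(0)=1$. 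Your computation still gives $\det J=2\,\xi(0)^9\,\xi''(0)$, so invertibility holds under the mild genericity $\xi(0)\neq 0$, $\xi''(0)\neq 0$---you should state that explicitly rather than silently specialize to $\xi(0)=1$.

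The substantive gap is in your argument for $\Pi_2=\text{Id}$ in the commuting case. You assert that the normalization $\pi_1\tilde B(0,0)=1$ is ``built into the definition of the pre-renormalization via the scaling $\alpha_n$,'' but this is not so in the two-dimensional construction: the rescaling $\Lambda_n$ is applied \emph{after} $\Pi_2$ (see (\ref{2Drenorm})), and $\ell_n=\pi_1\bar B(0,0)$ is computed from $p\cR^n\Sigma$ before either projection. Hence at the input to $\Pi_2$ there is no a priori reason for $\pi_1\tilde B(0,0)=1$, even when $A\circ B=B\circ A$, and uniqueness does not force $c=0$. Commutativity does propagate through $p\cR^n$ and (by the preceding proposition) through $\Pi_1$, so $\tilde A\circ\tilde B=\tilde B\circ\tilde A$ and the commutation equations (\ref{commutation}) are satisfied at $(a,b)=(0,0)$; it is only the normalization component that your argument does not cover. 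Either supply a separate justification that $\pi_1\tilde B(0,0)=1$ already holds for commuting inputs, or read the ``$\Pi_2=\text{Id}$'' claim as pertaining to the commutation corrections $(a,b)$ only, with the affine shift $c$ absorbed into the subsequent rescaling.
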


\noindent
Let us fix $n\in 2\NN$, $\cU$, $Q$, $\delta$ so that Lemma~\ref{lem-preren}  holds, and furthermore, the image 
$\Pi_1p\cR^n\cA(\cU,Q,\delta)$ lies in the $\rho$-neighborhood of $\iota(\cC(\alpha_n(\hat Z),\alpha_n(\hat W)))$ as in Proposition~\ref{prop:2Dprojection}. We then have:

\begin{prop}
\label{small-pert2}
For every $\Sigma \in \cA(\cU,Q,\delta)$,
$${\rm dist}(\Pi_2\Pi_1 p \cR^n \Sigma, \iota(\cB(\alpha_n(\hat Z),\alpha_n(\hat W)))) < {C { \delta (\|  \pi_1 \Sigma-\pi_2 \Sigma\|+\delta   )}}.$$
\end{prop}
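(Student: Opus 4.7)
My strategy is to push the estimate of Lemma~\ref{lem-preren} through the two projections $\Pi_1$ and $\Pi_2$, exploiting the fact that $\Pi_2\Pi_1$ fixes $\iota(\cB)$ pointwise.

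First I would show that $\Pi_2\circ\Pi_1$ is analytic, and in particular uniformly Lipschitz with some constant $L$, on a small neighborhood of $\iota(\cC(\alpha_n\hat Z,\alpha_n\hat W))$ in $O(\hat\Omega,\hat\Gamma)$. Analyticity of $\Pi_1$ follows from the Argument Principle and the implicit function theorem, since for pairs close to $\iota(\cC)$ the critical points $c_1,c_2$ are simple and depend analytically on the pair, and therefore so do the translations $T_1,T_2$. Analyticity of $\Pi_2$ is exactly Proposition~\ref{prop:2Dprojection}. Next, I would verify that $\Pi_2\Pi_1$ acts as the identity on $\iota(\cB)$: given $\zeta=(\eta,\xi)\in\cB$, the relations $\eta'(0)=\xi'(0)=0$ force the critical points of $\xi\circ\eta$ and $\eta\circ\xi$ to lie at $0$, so $T_1=T_2=\mathrm{Id}$ and $\Pi_1\iota(\zeta)=\iota(\zeta)$; the almost-commutation relations \eqref{eq:accond} together with $\xi(0)=1$ then mean that $\iota(\zeta)$ already satisfies $(\ref{commutation})$ and $(\ref{normalization})$, so by uniqueness in Proposition~\ref{prop:2Dprojection} the correction tuple vanishes and $\Pi_2\iota(\zeta)=\iota(\zeta)$.

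Given these two ingredients, it suffices to produce $\bar\zeta\in\cB(\alpha_n\hat Z,\alpha_n\hat W)$ with
\begin{equation*}
\|p\cR^n\Sigma-\iota(\bar\zeta)\|_{O(\hat\Omega,\hat\Gamma)}\leq C\delta(\|\pi_1\Sigma-\pi_2\Sigma\|+\delta),
\end{equation*}
since then
\begin{equation*}
\dist(\Pi_2\Pi_1\,p\cR^n\Sigma,\,\iota(\cB))\leq \|\Pi_2\Pi_1 p\cR^n\Sigma-\Pi_2\Pi_1\iota(\bar\zeta)\|\leq L\|p\cR^n\Sigma-\iota(\bar\zeta)\|.
\end{equation*}
I would construct $\bar\zeta$ as $\tilde\Pi_2\tilde\Pi_1(\cL(p\cR^n\Sigma))$, where $\tilde\Pi_1,\tilde\Pi_2$ are the 1D analogues of the present 2D projections (as in \cite{GaYa2}) applied to the $y=0$ slice $\cL(p\cR^n\Sigma)=(\bar\eta_1,\bar\xi_1)$. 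The estimate $\|p\cR^n\Sigma-\iota(\cL(p\cR^n\Sigma))\|\leq C\delta(\|\pi_1\Sigma-\pi_2\Sigma\|+\delta)$ is read off directly from the structural bounds recorded after Lemma~\ref{lem-preren}: the differences $\bar\eta_1-\bar\eta_2$ and $\bar\xi_1-\bar\xi_2$ are $O(\delta(\|\pi_1\Sigma-\pi_2\Sigma\|+\delta))$, while the transverse terms $\bar\tau_k,\bar\pi_k$ are $O(\delta^2)$.

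\textbf{Main obstacle.} The principal difficulty lies in the construction of $\bar\zeta$: one must verify that the 1D projections $\tilde\Pi_1,\tilde\Pi_2$ displace $\cL(p\cR^n\Sigma)$ by at most $O(\delta(\|\pi_1\Sigma-\pi_2\Sigma\|+\delta))$ in the $\cH$-norm, rather than by the naive $O(\delta)$. This calls for a quantitative inspection of $\tilde\Pi_1,\tilde\Pi_2$ near the renormalization fixed point in $\cB$: invariance of $\cC$ under 1D pre-renormalization places $\cL(p\cR^n\Sigma)$ in the correct stratum, and the critical-point translations and polynomial corrections introduced by the 1D projections are second-order in the deviation from $\cC$ and $\cB$. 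In effect, one must reproduce the mechanism of Lemma~\ref{lem-preren} in the purely 1D setting applied to $\cL(p\cR^n\Sigma)$; we follow the detailed construction developed in \cite{GaYa3}.
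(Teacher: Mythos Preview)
The paper does not actually supply a proof of this proposition in the text: it is recorded as an immediate consequence of Lemma~\ref{lem-preren} and Proposition~\ref{prop:2Dprojection} (``We then have:''), with the detailed verification deferred to \cite{GaYa3}, which the section preamble cites for all technical details. Your outline is therefore not a different route so much as a plausible reconstruction of what that deferred argument must contain, and the ingredients you list---analyticity (hence local Lipschitz continuity) of $\Pi_2\Pi_1$, invariance of the one-dimensional locus, and the structural bounds on $\bar\eta_i,\bar\xi_i,\bar\tau_k,\bar\pi_k$ recorded after Lemma~\ref{lem-preren}---are exactly the right ones.

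One simplification is available. You do not need $\Pi_2\Pi_1$ to fix $\iota(\cB)$ pointwise; it suffices that $\Pi_2\Pi_1$ maps $\iota(\cC)$ into $\iota(\cB)$. Indeed, for $\zeta'=(\eta',\xi')\in\cC$ the conditions $\eta'{}'(0)=\xi'{}'(0)=0$ already force the critical points of both compositions to sit at the origin, so $T_1=T_2=\mathrm{Id}$ and $\Pi_1\iota(\zeta')=\iota(\zeta')$; the polynomial correction $\Pi_2$ then adjusts $\zeta'$ within $\cC$ to satisfy \eqref{commutation}--\eqref{normalization}, and the $n=1$ commutation identity is automatic in $\cC$ by the chain rule, so the result lies in $\iota(\cB)$. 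Hence it is enough to produce $\bar\zeta\in\cC$ (rather than $\cB$) at distance $O(\delta(\|\pi_1\Sigma-\pi_2\Sigma\|+\delta))$ from $p\cR^n\Sigma$. This eliminates your $\tilde\Pi_2$-step entirely and reduces the ``main obstacle'' to controlling only the critical-point translation $\tilde\Pi_1$ on the one-dimensional slice. That residual estimate is genuine and is what \cite{GaYa3} carries out; you have correctly located where the work lies.
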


Let $\ell_n=\pi_1\bar B(0,0)$ and  $\Lambda_n(x,y)=(\ell_n x, \ell_n y)$.
\begin{defn}
 We define {\it the  renormalization of depth $n$ } of a pair $\Sigma \in \cA(\cU,Q,\delta)$ as
\begin{equation}\label{2Drenorm}
\cR_n\Sigma = \Lambda_n^{-1} \circ\Pi_2\circ\Pi_1\circ p\cR^n \Sigma \circ \Lambda_n.
\end{equation}
Given a map  $H$ from a subset of $\CC^2$ to $\CC^2$, such that the pair $(A,B)=p\cR^nH=(H^{q_{n+1}},H^{q_n}) \in  \cA(\cU,Q,\delta)$ for some integer $n$, we will also use the shorthand notation 
\[
\cR_n H \equiv \Lambda_n^{-1} \circ\Pi_2\circ\Pi_1\circ p\cR^n H \circ \Lambda_n.
\]
\end{defn}

\subsection{Hyperbolicity of renormalization of 2D dissipative maps}
%Let $n$ be an even number as above. 
We conclude this section by formulating the following theorem:
\begin{thm}
\label{mainthm6}
Given a $p$-periodic $\theta$, set that $\lambda=e^{2 \pi i\theta}$. Assume that {\rm ({\bf H})} holds.  Then there exists an even $n=m k$, where $m \in \NN$ and $k$ is as in $(\ref{k-def})$, such that the point $\iota(\zeta_\lambda)$ is a fixed point of $\cR_n$ in $O(\Omega,\Gamma)$. The linear operator
$N=D \cR_n \arrowvert_{\iota(\zeta_\lambda)}$  is compact. The spectrum of $N$ coincides with the spectrum of $M$, where $M$ is as in {\rm ({\bf H})}. More specifically, $\kappa \neq 0$ is an eigenvalue of $M$, and $h$ is a corresponding eigenvector if and only if $\kappa$ is an eigenvalue of $N$, and $D\iota(h)$ is a corresponding eigenvector.
\end{thm}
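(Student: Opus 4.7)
The proof splits into three parts matching the three assertions of the theorem. The guiding principle is that, on the embedded image $\iota(\cB(Z,W))$, the 2D operator $\cR^n$ of \eqref{2Drenorm} reduces to the 1D operator already controlled by hypothesis ({\bf H}), while perturbations transverse to this image get absorbed into it quadratically by Lemma~\ref{lem-preren} and Proposition~\ref{small-pert2}.

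For the first assertion I intend to verify the intertwining $\cR^n \circ \iota = \iota \circ \cR^n$ in a neighborhood of $\zeta_\lambda$, with the 2D operator on the left and the 1D operator on the right. For $\zeta = (\eta,\xi) \in \cB(Z,W)$ and $\Sigma = \iota(\zeta)$, the functions in \eqref{eq:A}--\eqref{eq:B} satisfy $a_y = h_y = \eta$ and $b_y = g_y = \xi$, all independent of $y$, so the auxiliary maps $\phi_z,q_z,w_z$ are likewise $z$-independent and the transformation $H_\Sigma$ of \eqref{eq:Htransform} acts as $(\eta,\eta)$ componentwise. A direct substitution into the formula for $p\cR^n$ then yields $p\cR^n\iota(\zeta) = \iota(p\cR^n\zeta)$. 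Both projections $\Pi_1$ and $\Pi_2$ act trivially on $\iota$-pairs drawn from the 1D almost-commuting class $\cB(Z,W)$, because the 2D critical-point and commutation conditions \eqref{commutation}--\eqref{normalization} reduce to their 1D counterparts \eqref{eq:accond} on such pairs; and $\Lambda_n$ acts as $\alpha_n$ componentwise. Specializing to $\zeta_\lambda$, which is 1D-$\cR^n$-fixed by ({\bf H}), gives $\cR^n\iota(\zeta_\lambda) = \iota(\zeta_\lambda)$.

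For compactness of $N$, Lemma~\ref{lem-preren} places the image of $p\cR^n$ inside the larger-domain space $O(\hat\Omega,\hat\Gamma)$, and the projections together with the rescaling preserve this feature. Since $\Omega \Subset \hat\Omega$ and $\Gamma \Subset \hat\Gamma$, the restriction $O(\hat\Omega,\hat\Gamma) \hookrightarrow O(\Omega,\Gamma)$ is a compact embedding by Montel's theorem, so $N$ is compact as the composition of a bounded analytic differential with this compact inclusion. For the spectral identification I use the decomposition $T_{\iota(\zeta_\lambda)} O(\Omega,\Gamma) = D\iota\bigl(T_{\zeta_\lambda}\cB(Z,W)\bigr) \oplus V_\perp$, with $V_\perp$ a closed complement. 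Differentiating the intertwining of the previous step gives $N \circ D\iota = D\iota \circ M$, so $D\iota(T\cB)$ is $N$-invariant and $N$ acts there as $M$ under $D\iota$. For the transverse estimate, given any $\tau$ and $\Sigma_t = \iota(\zeta_\lambda) + t\tau$, both the neighborhood parameter $\delta := \|\Sigma_t - \iota(\zeta_\lambda)\|$ and $\|\pi_1\Sigma_t - \pi_2\Sigma_t\|$ are $O(t)$, so Proposition~\ref{small-pert2} yields
$$\dist\bigl(\cR^n\Sigma_t,\; \iota(\cB(\alpha_n(\hat Z),\alpha_n(\hat W)))\bigr) = O(t^2);$$
dividing by $t$ and letting $t \to 0$ forces $N\tau$ to lie in $D\iota(T\cB)$ for every $\tau$. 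Hence $N$ has the block-upper-triangular form
$$N = \begin{pmatrix} M & N_{12} \\ 0 & 0 \end{pmatrix}$$
relative to the chosen decomposition. Its spectrum is $\sigma(M) \cup \{0\} = \sigma(M)$, using that $M$ is compact and therefore already contains $0$. For any nonzero eigenvalue $\kappa$, an eigenvector $\tau = D\iota(h) + v$ must satisfy $\kappa v = 0$ in the $V_\perp$-component, forcing $v = 0$ and then $Mh = \kappa h$ by the intertwining; conversely every $D\iota(h)$ with $Mh = \kappa h$ is an eigenvector of $N$ with the same eigenvalue, giving the stated correspondence.

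The main obstacle will be converting the quadratic distance bound of Proposition~\ref{small-pert2} into an exact vanishing of the bottom-right block of $N$: this requires not merely the displayed estimate but also that the rescaling $\Lambda_n^{-1}$ and the critical/commutation projections do not dilate the error by more than a bounded factor uniformly near the fixed point. A secondary technical issue is confirming that $N$ extends to a bounded operator on the whole of $O(\Omega,\Gamma)$ — that is, tracking the domains of definition carefully through the composition $\Lambda_n^{-1} \circ \Pi_2 \circ \Pi_1 \circ p\cR^n$ — before the compactness and the block-triangular spectral argument apply on the ambient Banach space.
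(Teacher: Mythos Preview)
Your proposal is correct and follows essentially the same strategy as the paper's proof: use the intertwining $\cR^n\circ\iota=\iota\circ\cR^n$ to identify the action on $D\iota(T\cB)$ with $M$, and then invoke the quadratic estimate of Proposition~\ref{small-pert2} to kill the transverse block. Your write-up is more detailed---you give explicit arguments for the intertwining, for compactness via Montel, and for the block-triangular form---whereas the paper compresses all of this into a few lines; the paper also phrases the quadratic contraction as coming from $(\cR^n)^2$ rather than a single $\cR^n$, but your single-step version is already sufficient since both $\delta$ and $\|\pi_1\Sigma_t-\pi_2\Sigma_t\|$ are $O(t)$.
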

\begin{proof}
Since $\iota$ is an immersion on $\cC(Z,W)$, and 
$$\iota\circ \cR^k=\cR_k\circ\iota, $$
the spectral decomposition of $N$ splits into the direct sum $T_1\oplus T_2$, where $T_1$ is the tangent subspace
$$T_1=T_{\iota(\zeta_\lambda)}\iota(\cB(Z,W)).$$
The restriction $N|_{T_1}$ is isomorphic to $M$. Further, by 
Proposition~\ref{small-pert2}, the magnitude of a perturbation of $\iota(\zeta_\lambda)$ in the direction of a vector in $T_2$ is decreased quadratically by 
$(\cR_n)^2$. Hence, in the spectral decomposition, the subspace $T_2$ corresponds to the zero eigenvalue.
\end{proof}

%!TEX root = main.tex
\medskip
\noindent
\section{Proof of Theorem C.}
\subsection{The H{\'e}non family intersects $W^s(\zeta_\lambda)$.}
Let us fix $\theta$, $\theta_1$, $\lambda$, $\lambda_1$ as in Theorem \ref{thm:C}. As before, let $k$ be as in $(\ref{k-def})$, and let $n$ be as in Theorem $\ref{mainthm6}$. For brevity, in what follows, we set 
\begin{equation} \label{RG} 
\cRG=\cR_n.
\end{equation}
%be the period of $\theta$ under the Gauss map. 
We prove:
\begin{thm}
\label{th:intersect}
 There exists $\delta>0$ such that if $|\mu|<\delta$ then the one-parameter family  $l\to H_{l,\mu}$ intersects the stable set of $\zeta_\lambda$ under $\cRG$. 
\end{thm}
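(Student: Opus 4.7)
The plan is to first identify an intersection point of the parameter family with $W^s(\zeta_\lambda)$ for the degenerate value $\mu=0$ by reduction to the one-dimensional theory, and then propagate this intersection to small $|\mu|>0$ via an implicit function argument built on the hyperbolicity statement of Theorem~\ref{mainthm6}.

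At $\mu=0$, the H\'enon map collapses to the one-dimensional quadratic $P_{l}$ of equation~(\ref{Plambda}), embedded in $\CC^2$ as the skew product $(x,y)\mapsto(P_l(x),0)$. The isometric embedding $\iota$ defined in~(\ref{eq:embed}) identifies the action of $\cRG=\cR^n$ on these degenerate pairs with the one-dimensional renormalization $\cR^n$ on commuting pairs. By McMullen's theorem recalled in the introduction, since $G^m(\theta_1)=\theta$, the orbit of $P_{\lambda_1}$ under the one-dimensional $\cR$ converges geometrically to $\zeta_\lambda$; combined with the choice $n=mk$ in~(\ref{k-def}), this places the embedded map $H_{\lambda_1,0}$ (viewed through an appropriate pre-renormalization pair $(H^{q_{N+1}},H^{q_N})$) inside $W^s(\zeta_\lambda)$ for $\cRG$.

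Next, by Theorem~\ref{mainthm6} the differential $N=D\cRG|_{\iota(\zeta_\lambda)}$ is compact and its spectrum decomposes into the spectrum of $M$ (which by ({\bf H}) has one simple eigenvalue $\kappa$ with $|\kappa|>1$ and the rest strictly contracting) plus a zero eigenvalue on the transverse subspace $T_2$. Consequently, the local stable set of $\zeta_\lambda$ under $\cRG$ is, in a neighborhood of $\iota(\zeta_\lambda)$ in $O(\Omega,\Gamma)$, an analytic Banach submanifold of codimension one, with tangent space the sum of $T_2$ and the strictly stable part of $T_1$. Let $\Phi$ be a local analytic projection onto the one-dimensional unstable direction, so that locally $W^s(\zeta_\lambda)=\{\Phi=0\}$.

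It remains to show that the assignment $(l,\mu)\mapsto\Phi\bigl((H_{l,\mu}^{q_{N+1}},H_{l,\mu}^{q_N})\bigr)$ is a well-defined analytic scalar function near $(\lambda_1,0)$ with nonvanishing $\partial_l$ at this point; the implicit function theorem then yields an analytic curve $\mu\mapsto l(\mu)$ with $l(0)=\lambda_1$ along which $H_{l(\mu),\mu}\in W^s(\zeta_\lambda)$, and the theorem follows by restricting to $|\mu|<\delta$. Analyticity of the iterates and of the composed renormalization in $(l,\mu)$ is clear from the definitions (the coordinate transformations $H_\Sigma$, the critical projection $\Pi_1$, and the commutation projection $\Pi_2$ of Proposition~\ref{prop:2Dprojection} are all analytic); the value at $(\lambda_1,0)$ is zero by the first paragraph; so the only substantive point is nonvanishing of $\partial_l$, which reduces through $\iota$ to transversality of the one-dimensional quadratic family $l\mapsto P_l$ to the one-dimensional stable manifold of $\zeta_\lambda$ at $\lambda_1$. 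This is the main obstacle: it requires that the tangent vector $\partial_l P_l|_{\lambda_1}$, transported by the appropriate power of the one-dimensional renormalization, has nonzero component along the unstable eigenvector of $M$. This transversality is the standard complement of hyperbolicity for the quadratic family with irrational rotation numbers satisfying the recurrence $G^m(\theta_1)=\theta$; heuristically, the unstable direction of $M$ corresponds to an infinitesimal change of rotation number, and $\partial_l P_l|_{\lambda_1}$ has a nonzero such component because moving $l=e^{2\pi i\theta_1}$ genuinely moves $\theta_1$. A rigorous verification can be carried out either by invoking existing transversality results for cylinder renormalization in the analytic one-dimensional setting, or by a direct computation of the pairing of $\partial_l P_l|_{\lambda_1}$ with the left eigenvector of $M$ at $\kappa$.
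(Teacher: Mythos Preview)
Your overall architecture matches the paper's: reduce to the one-dimensional situation at $\mu=0$, establish that the quadratic family $l\mapsto P_l$ is transverse to $W^s(\zeta_\lambda)$, then perturb to small $|\mu|$. The paper phrases the perturbation step as ``the family lies in the expanding cone field, hence a $C^1$-small perturbation still crosses $W^s_{\text{loc}}$'' rather than invoking the implicit function theorem, but these are interchangeable once transversality is known.

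The substantive difference is at the step you yourself flag as the main obstacle: the transversality $\partial_l\Phi|_{(\lambda_1,0)}\neq 0$. Your heuristic (``the unstable direction corresponds to change of rotation number, and moving $l$ moves $\theta_1$'') is not a proof; one has to show that the tangent vector to the quadratic family, after being pushed forward by a high iterate of renormalization, actually has a component along the unstable eigenvector of $M$ that does not vanish. The paper does not leave this to a citation or a computation ``to be carried out'': it imports the cone-field statement for cylinder renormalization $\cren$ from \cite{Ya1}, namely that $\|\cren^{(i+N)n}P_{l_t}-\cren^{(i+N)n}P_\lambda\|_U$ grows like $\beta^i t$ with $\beta>1$, and then transfers this growth to the commuting-pair operator $\cRG$ via Koebe distortion (comparing norms to displacement of the critical value) together with the analytic dependence on the parameter of the change of coordinates $\Phi_t$ relating the two renormalizations (equation~(\ref{eq:conj}) and the estimate $\|\Phi_t-\Phi_0\|\sim\|\cRG^{(i+N)s}P_{l_t}-\cRG^{(i+N)s}P_\lambda\|$). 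This chain of comparisons is the actual content of the proof, and it is what your proposal is missing. Your second suggested route---pairing $\partial_l P_l$ with the left eigenvector of $M$---would require explicit spectral information about $M$ that is not available in this generality.
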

\begin{proof}
Let $U\ni 0$ be a Jordan domain in $\CC$ and let $\bC_U$ denote the Banach space of bounded analytic maps $f$ in $U$ equiped with a uniform norm $\| \cdot \|_U$ and such that $f(0)=0$.
Let $f_*$ be the periodic  point of $\cren$ with $f_*'(0)=e^{2\pi i\theta}$ constructed in \cite{Ya1}. 
We denote the period of $f_*$ under $\cren$ by $p$.  As shown in \cite{Ya1}, there exists a choice of domains $U_1\Supset U$ such that 
$$f_*\in\bC_U\text{ and }\cren f_*\in\bC_{U_1}.$$ 

Let $n$ be as in  Theorem $\ref{mainthm6}$. For the quadratic polynomial $P_{\lambda_1}$ there exists $N$ such that its $N n$-th cylinder renormalization lies in the local stable set  of $f_*$ in $\bC_U$. 

As is shown in \cite{Ya1}, the family $l\mapsto \cren^N P_l$ lies in the unstable cone field of $\cren$. Specifically, if 
$$l_t=\lambda+t,$$
then
\begin{equation}
\label{eq1} ||\cren^{(i+N) n}P_{l_t}-\cren^{(i+N) n}P_\lambda ||_U=a\beta^it+o(t),\text{ where }\beta>1\text{ and } a>0.
\end{equation}
Let us select $i$ large enough, so that $\cren^{(i+N) n }P_\lambda \in \bC_{U_2}$ with $U_2\Supset U$. By Koebe Distortion Theorem, 
\begin{equation}
\label{eq2} ||\cren^{(i+N) n }P_{l_t}-\cren^{(i+N) n }P_\lambda ||_U\sim |(\cren^{(i+N) n}P_{l_t})(1)-(\cren^{(i+N) n}P_\lambda)(1)|,
\end{equation}
where  $1$ is the critical point.

Let us turn to renormalization of  commuting pairs. We recall that, according to (\ref{eq:conj2}), $sn$ steps of $\cR $ correspond to $n$ steps of the operator $\cren$.  %Let $s$ be as in \ref{eq:conj}. 
Using Koebe Distortion Theorem again, we see that
\begin{equation}
\label{eq3} 
||\cR^{(i+N)sn}P_{l_t}-\cR^{(i+N)sn}P_\lambda||\sim |(\cR^{(i+N)sn}P_{l_t})(0)-(\cR^{(i+N)sn}P_\lambda)(0)|.
\end{equation}
Denote $$(\eta_l,\xi_l)=\cR^{s-1}(\cren^{(i+N) n-1}P_l).$$
Let $\Phi_t$, $\Phi_0$ denote the uniformizing coordinates of the fundamental crescents of $\xi_{l_t}$, $\xi_\lambda$ respectively (\ref{eq:conj2}).
Note that, by complex {\it a priori} bounds \cite{Ya1} and Koebe Distortion Theorem, $\Phi_{t}$ has universally bounded distortion and $\Phi_{t}'\simeq 1$.
We have
\begin{equation}
\label{eq3a}
||\Phi_{{t}}-\Phi_{0}||\sim ||\cR^{(i+N)sn}P_{l_t}-\cR^{(i+N)sn}P_\lambda||.
\end{equation} 
%%Suppose $$ ||\rgy^{(n(p-1)+N)s}P_{l_t}-\rgy^{(n(p-1)+N)s}P_\lambda||=o(\beta^n).$$
The estimates (\ref{eq1})-(\ref{eq3a}) imply that 
$$||\cR^{(i+N)sn}P_{l_t}-\cR^{(i+N)sn}P_\lambda||\sim \beta^it.$$
Thus the family
$$l\mapsto g_l\equiv \iota\cR^{Nsn}P_{l}$$ lies in the expanding cone field of $\zeta_\lambda$ under $\cRG$.

\begin{figure}[htb]
\begin{center}
%\mbox{
%\subfigure{\includegraphics[scale=1]{SDisk1.eps}}
{\includegraphics[width=\textwidth]{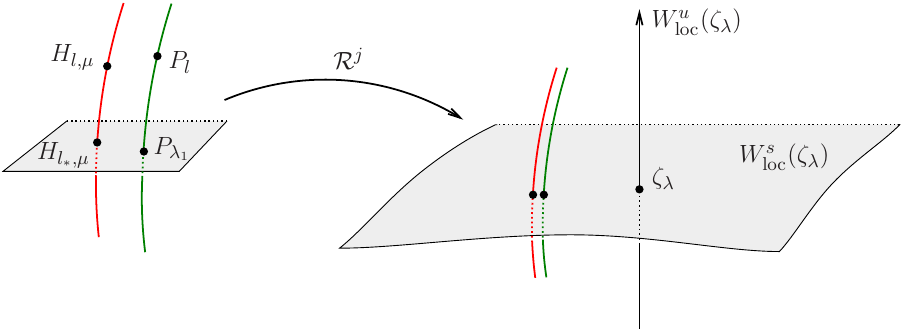}}
%\quad
%\subfigure{\includegraphics[scale=1]{SDisk2.eps}}
%}
\end{center}
\caption{An illustration to the proof of Theorem~\ref{th:intersect}; $j=Nns$.
\label{fig:transverse}}
\end{figure}

\noindent Since for a small enough $\mu$, the family
$$l\mapsto G_l\equiv \cRG^{N s}H_{l,\mu}$$
is a $C^1$-small perturbation of $g_l$, it is transverse to $W^s_\text{loc}(\zeta_\lambda)$ and hence, intersects with it (see Fig.~\ref{fig:transverse}).
\end{proof}

%\end{proof}

\subsection{Construction of an invariant curve}
%Below 
In this section we prove the following statement:
\begin{prop}
\label{prop-curve}
There exists $\eps>0$ such that the following holds. Let $|\mu|<\eps$, and 
$$H_{l_{*},\mu}\in W^s(\zeta_\lambda)\text{ where }\lambda=e^{2\pi i\theta}.$$
Denote $\Omega_n$, $\Gamma_n$ the domains of definition of the $n$-th pre-renormalization $p\cR^n H_{l_*,\mu}$. 
Then
there exists a curve $\gamma_*\subset \CC^2$ such that the following properties hold:
\begin{itemize}
\item $\gamma_*$ is a homeomorphic image of the circle;
\item $\gamma_*\cap \Omega_n\neq \emptyset$ and $\gamma_*\cap \Gamma_n\neq \emptyset$ for all $n\geq 0$;
\item there exists a topological conjugacy 
$$\varphi_*:\TT\to \gamma_*$$
between the rigid rotation $x\mapsto x+\theta_1\mod \ZZ$ and $H_{l_{*},\mu}|_{\gamma_*}$;
\item there exists $m$ such that $G^m(\theta_1)=\theta$;
\item the conjugacy $\varphi_*$ is not $C^1$-smooth.
\end{itemize}

\end{prop}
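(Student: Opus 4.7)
The plan is to construct the invariant curve $\gamma_*$ using the two-dimensional dynamical partition $\cV_n$ of Section~\ref{sec:partition}, applied to the pair $\zeta=(H_{l_*,\mu}^{q_{n+1}}, H_{l_*,\mu}^{q_n})$. The hypothesis $H_{l_*,\mu}\in W^s(\zeta_\lambda)$ yields $\cRG^k H_{l_*,\mu}\to\iota(\zeta_\lambda)$ in $O(\Omega,\Gamma)$; since the action of $\cRG$ on rotation numbers is $k$-fold iteration of the Gauss map, the rotation number $\theta_1$ at the semi-Siegel fixed point of $H_{l_*,\mu}$ must then satisfy $G^m(\theta_1)=\theta$ for some $m\geq 0$, giving the fourth bullet of the conclusion.

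First I would prove a 2D version of Proposition~\ref{partition}: the cells $Q_n\in\cV_n$ satisfy $\diam(Q_n)\leq C\gamma^n$ for some $\gamma\in(0,1)$, and any two neighboring cells are $K$-commensurable. The argument mirrors the proof of Proposition~\ref{partition}: renormalization convergence gives uniform analytic extensions of the inverse branches $\zeta^{-\bar w}$, $\bar w\prec\bar s_n$ (resp.\ $\bar t_n$), to definite bidisk enlargements $\hat\Omega\Supset\Omega$, $\hat\Gamma\Supset\Gamma$. A Koebe-type distortion bound applied to these branches (using the near-1D structure coming from the small vertical Jacobian) yields bounded distortion across $\cV_n$, and the geometric decay of the rescaling factors in $\Lambda_n$ delivers the quantitative estimate.

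With these estimates in hand, I would construct $\gamma_*$ and $\varphi_*$ as follows. In each cell $Q_n\in\cV_n$ fix a representative point (for instance, the image of $0\in\CC^2$ under the coordinate change defining the $n$-th pre-renormalization). The cyclic combinatorial order of these points under $H_{l_*,\mu}$ replicates the order of $\{k\theta_1\bmod 1\}_{k=0}^{q_n+q_{n+1}-1}\subset\TT$ under $T_{\theta_1}$, and the assignment is compatible across levels since $\cV_{n+1}$ refines $\cV_n$. Define $\gamma_*$ as the Hausdorff limit of the closures of these finite sets; the limit exists by the uniform shrinking from the previous step. Commensurability prevents collapse, so the cell-to-cell correspondence extends to a continuous, order-preserving bijection $\varphi_*:\TT\to\gamma_*$ conjugating $T_{\theta_1}$ to $H_{l_*,\mu}|_{\gamma_*}$; hence $\gamma_*$ is a topological circle. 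That $\gamma_*\cap\Omega_n\neq\emptyset$ and $\gamma_*\cap\Gamma_n\neq\emptyset$ is immediate, since each of these domains contains cells of $\cV_n$ and every cell contains a point of $\gamma_*$.

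The main obstacle is proving that $\varphi_*$ is not $C^1$-smooth. The key is the critical structure of the limit pair $\zeta_\lambda=(\eta_*,\xi_*)$: by the defining conditions of $\cC(Z,W)$ one has $\eta_*'(0)=\xi_*'(0)=0$, so both maps carry critical points of quadratic type at the origin. This critical structure is inherited by $H_{l_*,\mu}$ near the image $c=\varphi_*(0)\in\gamma_*$: the rescaled first-coordinate iterates $\pi_1(\Lambda_n^{-1}\circ H_{l_*,\mu}^{q_n}\circ\Lambda_n)$ converge to $\eta_*$, which has a critical point at $0$. Consequently, consecutive $\cV_n$-cells touching $c$ scale by a universal McMullen constant determined by the spectrum of $D\cRG|_{\zeta_\lambda}$. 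If $\varphi_*$ were $C^1$ at $0$, the ratios of $\gamma_*$-cell diameters at $c$ would have to equal, up to the multiplicative factor $|\varphi_*'(0)|$, the ratios of the corresponding rotation cell lengths $|q_{n+1}\theta_1\bmod 1|/|q_n\theta_1\bmod 1|$, which converge to the golden-mean-like asymptotic determined by the continued fraction of $\theta$. Since the McMullen scaling constant and the rotation scaling constant are distinct universal numbers, we reach a contradiction. Making this precise—cleanly extracting the McMullen scaling from the 2D partition and rigorously showing that $C^1$-smoothness forces the two scaling laws to coincide despite the freedom in the rescaling $\Lambda_n$—is where I expect the main technical effort to lie.
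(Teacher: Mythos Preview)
Your construction of $\gamma_*$ via shrinking partition cells is essentially the paper's approach, though the paper packages the contraction estimate differently. Rather than a two-variable Koebe argument (which would need care, since Koebe distortion is a one-variable statement), the paper introduces a \emph{renormalization microscope}
\[
\Phi^j \;=\; \Psi_{\bar w^0}^{\Sigma}\circ\Psi_{\bar w^1}^{\cRG\Sigma}\circ\cdots\circ\Psi_{\bar w^{j-1}}^{\cRG^{j-1}\Sigma},
\]
where each factor $\Psi_{\bar w}^{\cRG^l\Sigma}$ converges in $C^1$ to the corresponding one-dimensional branch $\psi_{\bar w}^{\zeta_\lambda}$ of Proposition~\ref{partition}. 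That proposition then gives $\|D\Psi_{\bar w}^{\cRG^l\Sigma}\|_\infty<\tfrac12$ once $\cRG^l\Sigma$ is close enough to $\zeta_\lambda$, so $\|D\Phi^j\|_\infty<C\,2^{-j}$ and the partition cells shrink geometrically. This sidesteps any genuinely two-dimensional distortion theory; your ``near-1D Koebe'' sketch would presumably arrive at the same place, but with more work.

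The real divergence is in the non-$C^1$ step, and here the paper's argument is far simpler than what you propose. You plan to compare two universal scaling constants---McMullen's self-similarity factor for the Siegel boundary versus the continued-fraction ratio $|q_{n+1}\theta_1\bmod 1|/|q_n\theta_1\bmod 1|$---and argue they disagree; this is true, but as you anticipate, isolating the first constant cleanly from the 2D partition and then invoking an inequality between two specific transcendental numbers is heavy. The paper bypasses all of this. It simply observes that if $\varphi_*$ were $C^1$, then since it conjugates the isometry $T_{\theta_1}$ to $H_{l_*,\mu}|_{\gamma_*}$, every iterate $H_{l_*,\mu}^n$ would be uniformly bi-Lipschitz on arcs of $\gamma_*$:
\[
K^{-1}\diam(J)<\diam\bigl(H_{l_*,\mu}^n(J)\bigr)<K\diam(J)\quad\text{for all arcs }J\subset\gamma_*\text{ and all }n.
\]
Now take $z\in\gamma_*\cap\Omega_n$, set $z'=H_{l_*,\mu}^{q_n}(z)$, and let $J_n$ be the short arc of $\gamma_*$ between them. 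Since $\cRG^n H_{l_*,\mu}\to\zeta_\lambda$ and $\zeta_\lambda$ has a \emph{quadratic} critical point at the origin, one gets
\[
\diam\bigl(H_{l_*,\mu}^{q_{n+1}}(J_n)\bigr)\;\sim\;\bigl(\diam J_n\bigr)^2,
\]
which violates the bi-Lipschitz bound as $\diam J_n\to 0$. No comparison of specific universal constants is needed---only the existence of the critical point, which you already identified as the key structural feature. So the part you flagged as the main technical effort is in fact a two-line contradiction.
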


\noindent
Before proving the above Proposition, we need to introduce some further notation.
Below, for brevity, we will denote $\Upsilon^1=\Omega,  \Upsilon^2=\Gamma$.

We set $n=k m$, as in Theorem $\ref{mainthm6}$ for some $m \ge 1$ (to be fixed later). 
%For notational simplicity, we set
%$$\cRG=\cR^n.$$

To differentiate between transformations for different pairs we will use the folllowing notation.
Denote
$$\bar s_n=(a_1,b_1,\ldots,a_{m_n},0)\text{ and }\bar t_n=(c_1,d_1,\ldots,c_{l_n},0).$$
Given a pair $\Sigma$, denote $\Lambda_\Sigma$ the rescaling that corresponds to the first renormalization $\cRG$, and $H_\Sigma$ - the transformation constructed for $\Sigma$ in \eqref{eq:Htransform}, that is
$$\cRG \Sigma=   \Lambda_\Sigma^{-1} \circ T_\Sigma^{-1}\circ H_\Sigma\circ \left( \Sigma^{\tilde s_n}, \Sigma^{\tilde t_n} \right)\circ H_\Sigma^{-1}\circ T_\Sigma  \circ \Lambda_\Sigma = L_\Sigma^{-1} \circ \hat p \cR^n \Sigma \circ L_\Sigma,$$
where
\begin{equation}\label{sprim}
{\tilde s_n}=(1,0,a_1,b_1, \ldots, a_{m_n}-1,0)\text{, and }{\tilde t_n}=(1,0,c_1,d_1, \ldots, c_{l_n}-1,0),
\end{equation}
%(and similarly for ${\tilde t_n}$),
and 
$$L_\Sigma=  H_\Sigma^{-1}\circ T_\Sigma \circ \Lambda_\Sigma.$$
Note that since the elements of $\Sigma$ commute, the projection $\Pi_2=\text{Id}$ and $\Pi_1$ is the conjugation by the translation $T_\Sigma:=T_1$. 

%%%%It is instructive to note  that $\cRG^l \Sigma \ne \cR^{ln} \Sigma$:
%%%$$\cRG^l \Sigma =L_{\cRG^{l-1} \Sigma}^{-1} \circ \ldots \circ L_\Sigma^{-1} \circ \hat p \cR^{ ln} \Sigma \circ L_{\Sigma} \circ \ldots \circ L_{\cRG^{l-1} \Sigma} \ne \Lambda_{l n}^{-1} \circ H_\Sigma \circ \hat p \cR^{ l n} \Sigma \circ H_\Sigma^{-1}  \circ \Lambda_{l n}=\cR^{l n} \Sigma.$$

Let $\bar s_n^l$ and $\bar t_n^l$ be defined by
$$(\hat p\cR^n)^l\zeta=(\zeta^{\bar s_n^l},\zeta^{\bar t_n^l}).$$ 

For each  multi-index $$\bar w=(a_0,b_0,a_1,b_1, \ldots, a_k,b_k) \prec \bar s_{n}^l\text{ or }\bar w=(a_1,b_1, \ldots, a_k,b_k) \prec \bar t_{n}^l$$ we define a domain
\begin{equation} \label{Qiw}
 Q_{\bar w}^i=\Sigma^{\bar w} \circ  L_\Sigma \circ L_{\cRG  \Sigma} \circ \ldots \circ L_{\cRG^{l-1} \Sigma}(\Upsilon^i), \ i=1 \text{ for }\bar w  \prec \bar s_{n}^l, \ i=2 \text{ for }\bar w  \prec \bar t_{n}^l.
\end{equation}

\noindent
By analogy with a dynamical partition of a commuting pair from  Section \ref{sec:partition}, the collection 
$$\cQ_{ln}\equiv \{ Q_{\bar w}^i\}$$ will be refered to as the $l n$-th partition for the two-dimensional  pair $\Sigma$.

Given $\Sigma \in W^s_{\text{loc}}(\zeta_\lambda)$, consider the following collection of  functions defined on $\Omega \cup \Gamma$:
$$\Psi_{\bar w}^\Sigma= \Sigma^{\bar w} \circ L_\Sigma.$$

\noindent
Given a collection of index sets $\{\bar w^i\}$, $\bar w^i \prec \bar s_{n}$ or $\bar w^i \prec \bar t_{n}$, consider the following {\it renormalization microscope}
$$\Phi_{\bar w^0,\bar w^1, \bar w^2, \ldots, \bar w^{j-1},\Sigma}^{j}= \Psi_{\bar w^0}^\Sigma \circ \Psi_{\bar w^1}^{\cRG \Sigma} \circ \ldots \circ  \Psi_{\bar w^{j-1}}^{\cRG^{(j-1)} \Sigma},$$
which we will also denote $\Phi^j_{{\hat w_0^{j-1}},\Sigma}$, where $\hat w_0^{j-1}=\left\{\bar w^0,\bar w^1, \bar w^2, \ldots, \bar w^{j-1} \right\}$,  for brevity. 

\begin{lem} The renormalization microscope  maps a set $\Upsilon^i$ onto an element of partition $\cQ_{j n}$ for $\Sigma$.
\end{lem}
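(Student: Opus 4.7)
\medskip
\noindent
\textbf{Proof proposal.} The plan is to proceed by induction on $j$, with the core mechanism being the semiconjugacy
\[
\cRG\Sigma \;=\; L_\Sigma^{-1} \circ \hat p\cR^n\Sigma \circ L_\Sigma.
\]
Since each element of $\hat p\cR^n\Sigma$ is, by construction, a composition $\Sigma^{\bar v}$ for a multi-index $\bar v$ built from $\tilde s_n$ and $\tilde t_n$ as in $(\ref{sprim})$, the semiconjugacy implies a key telescoping identity: for every multi-index $\bar v$ in the alphabet of the pair $\cRG\Sigma$ there is a multi-index $\bar v^{\#}$ in the alphabet of $\Sigma$, obtained by substituting $\eta \mapsto \tilde s_n$ and $\xi \mapsto \tilde t_n$, such that
\[
L_\Sigma \circ (\cRG\Sigma)^{\bar v} \;=\; \Sigma^{\bar v^{\#}} \circ L_\Sigma.
\]

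For the base case $j=1$, the definition of $\Psi^{\Sigma}_{\bar w^0}$ directly gives $\Phi^1_{\bar w^0,\Sigma}(\Upsilon^i) = \Sigma^{\bar w^0}\circ L_\Sigma(\Upsilon^i)$, which matches the formula $(\ref{Qiw})$ for $Q^i_{\bar w^0}$; the assumption $\bar w^0 \prec \bar s_n$ (resp.\ $\bar t_n$) combined with the structural relationship between $\bar s_n,\bar t_n$ and $\tilde s_n,\tilde t_n$ from $(\ref{sprim})$ places this element in $\cQ_n$. For the inductive step, I would apply the hypothesis at level $j-1$ to the pair $\cRG\Sigma$, giving
\[
\Phi^{j-1}_{\bar w^1,\ldots,\bar w^{j-1},\,\cRG\Sigma}(\Upsilon^i) \;=\; (\cRG\Sigma)^{\bar v}\circ L_{\cRG\Sigma}\circ\cdots\circ L_{\cRG^{j-1}\Sigma}(\Upsilon^i)
\]
for some $\bar v \prec \tilde s_{(j-1)n}$ (or $\tilde t_{(j-1)n}$) in the alphabet of $\cRG\Sigma$. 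Precomposing with $\Psi^{\Sigma}_{\bar w^0} = \Sigma^{\bar w^0}\circ L_\Sigma$ and invoking the identity above to slide $L_\Sigma$ past $(\cRG\Sigma)^{\bar v}$ yields
\[
\Phi^{j}_{\hat w_0^{j-1},\,\Sigma}(\Upsilon^i) \;=\; \Sigma^{\bar w^0 \ast \bar v^{\#}} \circ L_\Sigma\circ L_{\cRG\Sigma}\circ\cdots\circ L_{\cRG^{j-1}\Sigma}(\Upsilon^i) \;=\; Q^i_{\bar w^0\ast \bar v^{\#}},
\]
by the defining formula $(\ref{Qiw})$.

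The main obstacle, and essentially the only nontrivial point, is the combinatorial bookkeeping needed to confirm that the concatenated multi-index $\bar w^0 \ast \bar v^{\#}$ is actually $\prec \tilde s_{jn}$ (resp.\ $\tilde t_{jn}$) for $\Sigma$, so that $Q^i_{\bar w^0\ast \bar v^{\#}}\in\cQ_{jn}$. The point here is that $\tilde s_{jn}$ admits a recursive description: it is obtained from $\tilde s_{(j-1)n}$ (a multi-index for $\cRG\Sigma$) by precisely the substitution $\eta\mapsto\tilde s_n$, $\xi\mapsto\tilde t_n$ that defines $\bar v\mapsto\bar v^{\#}$. The leading prefix adjustment in $(\ref{sprim})$ that distinguishes $\tilde s_n$ from $\bar s_n$ is exactly what absorbs the outermost $L_\Sigma^{-1}$ in the definition of $\cRG\Sigma$, making the telescoping consistent. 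Once this combinatorial compatibility is established, the inductive step closes and the lemma follows; no analytic input beyond $\Sigma\in W^s_{\text{loc}}(\zeta_\lambda)$ (ensuring all compositions are well defined in the domains of $\cRG^k\Sigma$, $0\le k \le j-1$) is required.
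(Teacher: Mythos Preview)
Your proposal is correct and follows essentially the same inductive scheme as the paper: base case from the definition~(\ref{Qiw}), inductive step by applying the hypothesis to $\cRG\Sigma$ and then using the semiconjugacy $L_\Sigma\circ(\cRG\Sigma)^{\bar v}=\Sigma^{\bar v^\#}\circ L_\Sigma$ to rewrite the composition as $\Sigma^{\bar u}\circ L_\Sigma\circ\cdots\circ L_{\cRG^{j-1}\Sigma}$. If anything, you are more explicit than the paper about the one point that needs checking---namely, that the concatenated index $\bar w^0\ast\bar v^{\#}$ is admissible for $\cQ_{jn}$---which the paper handles by simply writing ``for some index $\bar u$'' and invoking~(\ref{Qiw}).
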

\begin{proof}
The claim holds for $j=1$ by the definition $(\ref{Qiw})$ of the elements of the partition.

Assume that it $\Phi^j_{\hat w_0^{j},\Sigma}(\Upsilon^i)$ is an element of partition $\cQ_{j n}$ for $\Sigma$.

Consider  $\Phi^{j+1}_{\hat w_0^j,\Sigma}(\Upsilon^i)$:
$$\Phi^{j+1}_{\hat w_0^{j},\Sigma}(\Upsilon^i)=\Psi_{\bar w^0}^\Sigma \circ \Psi_{\bar w^1}^{\cRG \Sigma} \circ \ldots \circ  \Psi_{\bar w^j}^{\cRG^{j } \Sigma}(\Upsilon^i).$$
By assumption, 
$$\Phi_{\hat w_1^{j},\cRG \Sigma}^j(\Upsilon^i) \equiv \Psi_{\bar w^1}^{\cRG \Sigma} \circ \ldots \circ  \Psi_{\bar w^j}^{\cRG^{j } \Sigma}(\Upsilon^i)$$ 
is an element of the partition of level $j n$ for the pair $\cRG \Sigma$, that is, by $(\ref{Qiw})$  
$$\Phi_{\hat w_1^{j},\cRG \Sigma}^j(\Upsilon^i)=(\cRG \Sigma)^{\bar v}  \circ  L_{\cRG \Sigma} \circ L_{\cRG^2 \Sigma} \circ \ldots \circ L_{\cRG^j \Sigma} (\Upsilon^i),$$
for some admissible $\bar v=(\alpha_0,\beta_0, \alpha_1, \beta_1, \ldots, \alpha_m,\beta_m)$. Therefore, using the shorthand $$\cRG \Sigma=(A_1,B_1),$$ we have:
\begin{eqnarray}
\nonumber \Phi^{j+1}_{\hat w_0^{j},\Sigma}(\Upsilon^i)&=& \Psi_{\bar w^0}^\Sigma \circ \Phi_{\hat w_1^{j},\cRG \Sigma}^j(\Upsilon^i), \\
\nonumber &=& \Sigma^{\bar w^0} \circ L_{\Sigma} \circ (\cRG \Sigma)^{\bar v} \circ  L_{\cRG \Sigma}  \circ \ldots \circ L_{\cRG^j \Sigma} (\Upsilon^i)  \\
\nonumber &=& \Sigma^{\bar w^0} \circ L_{\Sigma} \circ ( B_1^{\beta_m} \circ A_1^{\alpha_m} \circ \ldots \circ  B_1^{\beta_0} \circ A_1^{\alpha_0}) \circ  L_{\cRG \Sigma}  \circ \ldots \circ L_{\cRG^j \Sigma} (\Upsilon^i)  \\
\nonumber &=& \Sigma^{\bar w^0} \circ L_{\Sigma} \circ \Lambda_\Sigma^{-1} \circ H_\Sigma \circ T_\Sigma^{-1}\circ \hspace{-1.0mm}\left( \!\left(\Sigma^{\tilde t_n}\right)^{\beta_m}\hspace{-1.5mm} \circ \!\left(\Sigma^{\tilde s_n}\right)^{\alpha_m}\hspace{-1.5mm} \circ\! \ldots \!\circ \!  \left(\Sigma^{\tilde t_n}\right)^{\beta_0}\hspace{-1.5mm} \circ \! \left(\Sigma^{\tilde s_n}\right)^{\alpha_0} \!\right)\! \circ \\
 \nonumber &\phantom{=}&  \phantom{\Sigma^{\bar w^0}} \circ T_\Sigma \circ H_{\Sigma}^{-1} \circ \Lambda_\Sigma \circ  L_{\cRG \Sigma}  \circ \ldots \circ L_{\cRG^j \Sigma} (\Upsilon^i)  \\
%\nonumber &=& \Sigma^{\bar w^0} \hspace{-1.5mm} \circ \! \left(\Sigma^{\tilde t_n}\right)^{\beta_m} \hspace{-1.5mm}\circ \!\left(\Sigma^{\bar s_n'}\right)^{\alpha_m} \hspace{-1.5mm} \circ \! \ldots \! \circ \! \left(\Sigma^{\bar t_n'}\right)^{\beta_0}  \hspace{-1.5mm} \circ \! \left(\Sigma^{\bar s_n'}\right)^{\alpha_0} \hspace{-1.5mm} \circ \! L_\Sigma  \! \circ \!\ldots \!\circ \!L_{\cRG^j \Sigma} (\Upsilon^i)  \\
\nonumber &=& \Sigma^{\bar u} \circ L_\Sigma  \circ \ldots \circ L_{\cRG^j \Sigma} (\Upsilon^i),
\end{eqnarray}
for some index $\bar u$. By  (\ref{Qiw}), the latter is an element of the partititon $\cQ_{(j+1) n}$.
\end{proof}

%%We will reassign indices  to the elements of the partition according to the indices of the branches of the renormalization microscope whose images the elements are:
%%\begin{equation}
%%\label{pieces} Q_{\hat w_0^{j-1}}^i=\Phi^j_{\hat w_0^{j},\Sigma}(\Upsilon^i)
%%\end{equation}

\noindent
Since ${\cRG^{l} \Sigma}$ converges to $\zeta_\lambda$ at a geometric rate, the function $\Psi_{\bar w}^{\cRG^l \Sigma}$  converges to the function $(\psi_{\bar w}^{\zeta_*}, \psi_{\bar w}^{\zeta_*})$, defined in Proposition~\ref{partition}, at a geometric rate in $C^1$-metric. Therefore, by  Proposition~\ref{partition}, there  exists a neighborhood $\cS$ in $W^s_{\text{loc}}(\zeta_\lambda)$, and a sufficiently large $l$, such that 
$$\|D \Psi_{\bar w}^{\cRG^l \Sigma} |_{\Upsilon^i} \|_\infty < {1 \over 2},$$
whenever $\cRG^l \Sigma \in \cS$.

For every $\Sigma \in W^s_{\text{loc}}(\zeta_\lambda)$,  there exists $i_0\in\NN$ such that $\cRG^{i} \Sigma \in \cS$ for $i\geq i_0$. Hence,  there exists $C=C(\Sigma)$, such that
\begin{equation}
\label{miccontract}
\|D \Phi^j_{{\hat w}, \Sigma}|_{\Upsilon^i} \|_\infty < {C \over 2^j},
\end{equation}
and thus the renormalization microscope is a uniform metric contraction.

We are now ready to prove \propref{prop-curve}. 
\begin{proof}[Proof of \propref{prop-curve}]
Let $$\cRG^r(H_{l_{*},\mu})\equiv \Sigma=(A, B)\in W^s(\zeta_\lambda)$$
for some $r \in \NN$.

Select a distinct point $(x_{\bar w},y_{\bar w})$ in each of the sets $Q^i_{\bar w} \in \cQ_{l n}$. Consider the $l n$-th dynamical partition $\cP_{l n}$ for the pair $T$ as defined in (\ref{rigid-pair}).    Consider a piecewise-constant map  $\varphi_l$ sending the element of the partition with a multi-index $\bar w$ to $(x_{\bar w},y_{\bar w})$. According to (\ref{miccontract}), the diameters of the sets $Q^i_{\bar w}$ decrease at a geometric rate. Thus,  the maps $\varphi_l$  converge uniformly to a continuous map $\varphi$ of the interval $[-1,\theta]$ which is a homeomorphism onto the image. Set 
$$\varphi([-1,\theta])\equiv \gamma.$$
By construction,
$$\varphi\circ T=\Sigma \circ \varphi.$$

Let $\gamma_1\subset K^+(H_{l_{*},\mu})$ be the preimage of $\gamma$ under renormalization rescaling, and set
 $$\gamma_*\equiv \cup_{n\in\NN}H_{l_{*},\mu}(\gamma_1).$$
%%By construction, $\gamma_*$ is the boundary of a Siegel disk of $H_{l_{*},\mu}$.
The conjugacy $\varphi$ induces a conjugacy 
$$\varphi_*:\TT\to \gamma_*$$
between a rigid rotation and $H_{l_{*},\mu}|_{\gamma_*}$. Hence, setting $l_{*}=e^{2\pi i\theta_1}$, we have
$$G^r(\theta_1)=\theta$$
for some $r \geq 0$. 

Finally, since the limiting pair $\zeta_\lambda$ has a critical point at $z=0$, the conjugacies $\varphi$ and $\varphi_*$ cannot be $C^1$-smooth. Indeed, assume the contrary. This would imply that there exists $K>1$ such that for every arc $J\subset \gamma_*$ and every $n\in\NN$, we have
\begin{equation}
\label{eq:contradiction}
\frac{1}{K}\diam(J)<\diam(H_{l_{*},\mu}^n(J))<K\diam(J).
\end{equation}
However, let $\Omega_n$, $\Gamma_n$ denote the domains of the pair $p\cR^nH_{l_{*},\mu}$. Let $z\in\gamma_*\cap \Omega_n$ and $z'=H_{l_{*},\mu}^{q_{n}}(z)$,
and denote $J_n$ the smaller subarc of $\gamma_*$ bounded by these two points. Since 
$$\cRG H_{l_{*},\mu}\approx \zeta_\lambda$$
for large values of $n$, we have
$$\diam (H_{l_{*},\mu}^{q_{n+1}}(J_n))\sim \left( \diam(J_n) \right)^2.$$
This clearly contradicts (\ref{eq:contradiction}).
\end{proof}

\subsection{The curve $\gamma_*$ bounds a Siegel disk}
Let us define a {\it $\varrho$-vertical cone field} in the tangent bundle $T\Omega$ where $\Omega$ is a subdomain of $\CC^2$ as
\[
C^{\text{vert},\varrho}_{(x,y)}=\left\{ (u,v)\in T_{(x,y)} \Omega,\ |u|<\varrho|v|\right\}.
\]

%%We say that a foliation is $\varrho$-vertical like if its tangent bundle lies   in $C^{\text{vert},\varrho}$. 

Let $f:U \rightarrow \CC$ be a holomorphic map. We consider two-dimensional perturbations of this map $F:\Omega\rightarrow \CC^{2}$ of the form
\begin{equation}
\label{2Dmap}
F(x,y)=\left(w(x,y),h(x,y) \right)=\left(f(x)+\tau(x,y),g(x)+ \chi(x,y) \right).
\end{equation}

We note:
\begin{prop}\label{prop:cone-inv} Suppose $|f'(x)|, |g'(x)|>\kappa$ and $|f'(x)|,|g'(x)|<K$ on the domain $U$ for some $\kappa>0$. Let $F^{-1}$ be defined on $\Delta=F(\Omega)$. 

Then there exist $\eps>0$ and $\varrho>0$ such that the following holds. Suppose the uniform norms of $\tau$ and $\chi$ in (\ref{2Dmap}) on $\Omega$ are bounded by $\eps$.  Given $\hat{\Delta} \Subset \Delta$, for every $(x,y)\in \hat{\Delta}$, denoting $(x_1,y_1)=F(x,y)$, we have
\[
D F^{-1}\big{|}_{(x_1,y_1)}\left(C^{\text{vert},\varrho}_{(x_1,y_1)}\right)\subset \ C^{\text{vert},\varrho}_{(x,y)},
\]
and $\|D F^{-1}\| >O\left({\kappa \over K \eps }\right)$  in $C^{\text{vert},\varrho}$.
\end{prop}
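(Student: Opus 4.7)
The plan is to prove both assertions by direct computation with the Jacobian matrix of $F$, exploiting the special triangular-like structure of the perturbation. Write
\[
DF(x,y) = \begin{pmatrix} f'(x) + \partial_x\tau & \partial_y\tau \\ g'(x) + \partial_x\chi & \partial_y\chi \end{pmatrix}.
\]
Fix a domain $\Omega'$ with $F^{-1}(\hat\Delta)\Subset\Omega'\Subset\Omega$. Since $\tau$ and $\chi$ are holomorphic on $\Omega$ with $\|\tau\|_{\Omega},\|\chi\|_\Omega\le\eps$, the Cauchy estimates give a constant $C=C(\Omega',\Omega)$ such that all four partial derivatives of $\tau$ and $\chi$ are bounded by $C\eps$ on $\Omega'$. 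In particular, on $F^{-1}(\hat\Delta)$,
\[
|\partial_y\tau|,\ |\partial_y\chi|,\ |\partial_x\tau|,\ |\partial_x\chi| \;=\; O(\eps).
\]

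Next I would invert $DF$ by the standard formula and analyze the action on a cone vector. Let $(u_1,v_1)\in C^{v,\varrho}_{(x_1,y_1)}$ with $|u_1|<\varrho|v_1|$, and set $(u,v)=DF^{-1}(u_1,v_1)$. Then
\[
u = \frac{\partial_y\chi\cdot u_1 - \partial_y\tau\cdot v_1}{\det DF}, \qquad
v = \frac{-(g'+\partial_x\chi)u_1 + (f'+\partial_x\tau)v_1}{\det DF}.
\]
The numerator of $u$ is $O(\eps)(1+\varrho)|v_1|$, while the numerator of $v$ is bounded below by $\bigl(\kappa-O(\eps)-(K+O(\eps))\varrho\bigr)|v_1|$. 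I would first fix $\varrho>0$ small enough that $(K+1)\varrho<\kappa/4$, and then choose $\eps>0$ so small that the $O(\eps)$ error terms are negligible relative to $\kappa$. Then $|v|\ge \tfrac12\kappa|v_1|/|\det DF|$ while $|u|\le 2C\eps|v_1|/|\det DF|$, so
\[
\frac{|u|}{|v|} \;\le\; \frac{4C\eps}{\kappa} \;<\;\varrho
\]
provided $\eps$ is chosen small enough; this gives cone invariance.

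For the lower bound on $\|DF^{-1}\|$, I would estimate the determinant from above: expanding $\det DF = (f'+\partial_x\tau)\partial_y\chi - \partial_y\tau(g'+\partial_x\chi)$ and using $|g'|<K$, Cauchy estimates, and the standing assumption that $|f'|$ is controlled on the relevant compact set (in particular $|f'|=O(K)$ on $\Omega'$, which is the regime of interest for the proposition and which one can absorb into constants), one gets $|\det DF|=O(K\eps)$. Combined with the lower bound $|v|\gtrsim \kappa|v_1|/|\det DF|$ and the cone inequality $\|(u_1,v_1)\|\asymp |v_1|$, $\|(u,v)\|\asymp|v|$, this yields
\[
\|DF^{-1}(u_1,v_1)\| \;\gtrsim\; \frac{\kappa}{K\eps}\,\|(u_1,v_1)\|.
\]

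The only delicate point is choosing the order of quantifiers correctly: one picks $\varrho$ depending on $\kappa/K$ first, then $\eps$ depending on $\varrho$, $\kappa$, $K$, and on the Cauchy constant $C(\Omega',\Omega)$. No single step is hard; the whole argument is a quantitative version of the obvious fact that a small perturbation of the degenerate map $(x,y)\mapsto(f(x),g(x))$ has a nearly vertical preimage foliation with strong expansion in the vertical direction, and the main care is in tracking that the vertical-direction expansion scales like $1/\eps$ because the map is near-degenerate of corank one.
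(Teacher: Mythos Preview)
Your proposal is correct and follows essentially the same route as the paper's own proof: both compute $DF^{-1}$ explicitly via the adjugate formula, bound the partial derivatives of $\tau,\chi$ by $C\eps$ (the paper leaves this implicit, you name it as Cauchy estimates), estimate the numerators of $u$ and $v$ exactly as you do, and bound $|\det DF|=O(K\eps)$ from above. Your explicit discussion of the order of quantifiers and of the need to control $|f'|$ from above on the compact subdomain is, if anything, more careful than the paper, which absorbs this silently into its constant $C$.
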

\begin{proof}
Let $w_i(x,y)=\partial_i w(x,y)$ denote the $i$-th partial derivative of $w(x,y)$, $i=1,2$. Similarly for $h_i(x,y)$. A simple computation shows that:
\begin{equation}D F^{-1}(x_1,y_1) \left[ u \atop v\right]={ 1 \over D(x,y)} \left[\phantom{-} h_2(x,y)    -w_2(x,y)  \atop  -h_1(x,y) \phantom{-} w_1(x,y)   \right] \cdot \left[u \atop v \right]={ 1 \over D(x,y)}  \left[\tilde{u} \atop \tilde{v} \right],
\end{equation}
%\begin{eqnarray}
%\nonumber D F^{-1}(x_1,y_1) \left[ u \atop v\right]&=&{ 1 \over D(x,y)} \left[\phantom{-} h_2(x,y)    -w_2(x,y)  \atop  -h_1(x,y) \phantom{-} w_1(x,y)   \right] \cdot \left[u \atop v \right]\\
%%\nonumber &=&{ 1 \over D(x,y)} \left[\phantom{-} h_2(x_1,y_1)    -w_2(x_1,y_1)  \atop  -h_1(x_1,y_1) \phantom{-} w_1(x_1,y_1)   \right] \cdot \left[u \atop v \right]\\
%\nonumber &=&{ 1 \over D(x,y)}  \left[\tilde{u} \atop \tilde{v} \right],
%\end{eqnarray}
where $D(x,y)= w_1(x,y) h_2(x,y) - w_2(x,y) h_1(x,y)$, and
\begin{eqnarray}
\nonumber |\tilde{u}| &<& C \eps (|u|+|v|) < C \eps (\varrho+1) |v|, \\
\nonumber |\tilde{v}| &>& (\kappa - C \eps) |v|-(|g_1(x_1)|+C \eps) |u|>(\kappa - C(1+\varrho) \eps -\varrho K) |v|,
\end{eqnarray}
and $|\tilde{u}| < \varrho |\tilde{v}|$ if $\varrho(\kappa - \varrho K)>C  \eps (\varrho+1)^2$. Furthermore, $|D(x,y)| < 2 (K+C \eps) C \eps$ for some $C>0$ and all $(x,y) \in \hat \Delta$. The lower bound on the operator norm $\| D F^{-1}\|$ on the vertical cone field follows.
\end{proof}

\noindent
As before, for $H_{l_*,\mu}\in W^s(\zeta_\lambda)$, we let $\Omega_n$, $\Gamma_n$ be the domains of the pair 
$${\mathfrak Z_n}=({\mathfrak A}_n,{\mathfrak B}_n)\equiv p\cR^n H_{l_*,\mu}.$$
For brevity, let us also write 
$$\Delta_n\equiv \Omega_n\cup\Gamma_n\text{ and }\Delta'_n\equiv {\mathfrak Z}_n(\Delta_n)\equiv {\mathfrak A}_n(\Omega_n)\cup{\mathfrak B}_n(\Gamma_n).$$

Let $\alpha_*$ be the scaling factor $\alpha_n$ (see definition (\ref{alpha_n})) for the pair $\zeta_\lambda$.
\begin{prop}
\label{prop:hyp1} There exists $N\in\NN$ such that for any  $n\geq N$ we can select  $\delta_0>0$, $k \in \NN$ and $\varrho>0$ so that the following holds. Let $|\mu|<\delta<\delta_0$ and $H_{l_*,\mu}\in W^s(\zeta_\lambda)$. Then 
the derivatives of the inverse branches of the restriction of the pair ${\mathfrak Z}_n$ to the domains $\Delta_n\setminus \Delta_{n+k}$ preserve  the vertical cone field $C^{\text{vert},\varrho}$ and expand vectors in $C^{\text{vert},\varrho}$ at a rate $O(|\alpha_*|^{k} \delta^{-2})$.
\end{prop}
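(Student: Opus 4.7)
\smallskip
\noindent
\textbf{Proof proposal.} The strategy is to apply Proposition~\ref{prop:cone-inv} to each component of $Z_n = p\cR^n H_{l_*,\mu}$ in coordinates where it is close to the skew-product embedding $\iota(\zeta_\lambda)$. Because $H_{l_*,\mu}\in W^s(\zeta_\lambda)$, for any prescribed smallness one can fix $j_0$ so that $\cRG^j H_{l_*,\mu}$ stays in a correspondingly small neighborhood of $\iota(\zeta_\lambda)$ in $O(\Omega,\Gamma)$ for $j\ge j_0$. Unwinding $Z_n$ through the rescalings appearing in the renormalization microscope of Section~\ref{sec:RenACM}, each component of $Z_n$ takes the form \eqref{2Dmap} in which $(f,g)$ is a small perturbation of the corresponding univariate component of $\iota(\zeta_\lambda)$ and the $y$-perturbation $(\tau,\chi)$ has some size $\varepsilon_n$ tending to zero.

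I would then check the non-degeneracy hypotheses $|f'|>\kappa$, $|g'|<K$ of Proposition~\ref{prop:cone-inv}. These fail for the univariate components of $\iota(\zeta_\lambda)$ only at the critical point $0$. The role of $k$ is to excise a neighborhood of that critical point: by Proposition~\ref{partition}, the elements of $\cV_{n+k}$ are pairwise $K$-commensurable with diameters $O(\gamma^{n+k})$, and for $k$ large enough $\Delta_{n+k}$ contains the region where $|f'|$ becomes small, leaving $|f'|, |g'|$ with uniform-in-$n$ bounds on $\Delta_n\setminus \Delta_{n+k}$. For $n\ge N$, $\varepsilon_n$ is below the smallness threshold of Proposition~\ref{prop:cone-inv}, which yields invariance of a fixed vertical cone $C^{v,\varrho}$ and the lower bound $\|DZ_n^{-1}\|\gtrsim \kappa/(K\varepsilon_n)$ on it. Pulling back through the $n+k$ rescalings by $\alpha_n\approx\alpha_*$ contributes the $|\alpha_*|^{-n-k}$ factor via the chain rule.

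The remaining task is the quantitative bound $\varepsilon_n = O(\delta^{2^n})$. I would argue by induction using Lemma~\ref{lem-preren}: that lemma bounds the distance from $p\cR^n\Sigma$ to the skew-product subspace by $C\delta(\|\pi_1\Sigma - \pi_2\Sigma\| + \delta)$, and once both $\nu_n := \|\pi_1 Z_n - \pi_2 Z_n\|$ and $\varepsilon_n$ are below a fixed $\delta_0$ one extracts a recursion $\varepsilon_{n+1} = O(\varepsilon_n(\nu_n + \varepsilon_n))$, yielding the doubling $\varepsilon_{n+1} = O(\varepsilon_n^2)$ after both have become comparable, and hence $\varepsilon_n = O(\delta^{2^n})$ starting from $\varepsilon_0 = O(\mu) = O(\delta)$. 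The main obstacle is making this squaring recursion rigorous: a naive composition count only yields single-exponential bounds coming from the Jacobian, and the quadratic improvement has to be teased out from the interplay between Lemma~\ref{lem-preren}, the near-commutation structure of $\iota(\zeta_\lambda)$, and the projections $\Pi_1,\Pi_2$ in the definition of $\cRG$, while verifying that the quadratic smallness propagates cleanly through the rescalings.
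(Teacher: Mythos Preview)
Your overall strategy---write $Z_n$ in the form \eqref{2Dmap}, bound the $y$-perturbation by $O(\delta^{2^n})$ via Lemma~\ref{lem-preren}, bound $|f_n'|$ from below on $\Delta_n\setminus\Delta_{n+k}$ by excising a neighborhood of the critical point, and feed these into Proposition~\ref{prop:cone-inv}---is exactly the paper's. Two points of friction, however.

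First, you over-worry the bound $\varepsilon_n=O(\delta^{2^n})$. There is no obstacle to be teased out: Lemma~\ref{lem-preren} already says that one pass of pre-renormalization sends a pair at distance $\delta$ from the diagonally embedded one-dimensional space to one at distance $O(\delta^2)$, because for such a pair $\|\pi_1\Sigma-\pi_2\Sigma\|=O(\delta)$ as well. Iterating gives the super-exponential decay immediately; the paper simply invokes Lemma~\ref{lem-preren} and moves on. The projections $\Pi_1,\Pi_2$ play no role in this step.

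Second, your account of the $|\alpha_*|$-factor is off. You attribute it to ``pulling back through the $n+k$ rescalings by $\alpha_n\approx\alpha_*$,'' but $Z_n=p\cR^n H_{l_*,\mu}$ is the \emph{pre}-renormalization, living directly on the small domains $\Delta_n$ in the ambient coordinates; there are no rescalings to undo, and in any case the diagonal rescaling $\Lambda_n$ acts identically on both coordinates of $\CC^2$ and hence affects neither the vertical cone nor the expansion ratio. In the paper the factor enters instead through $\kappa$: since $\Delta_{n+k}$ is the image of $\Delta_n$ under a linear map close to multiplication by $\alpha_*^k$, the distance from $\partial\Delta_{n+k}$ to the critical point $0$ of $f_n$ controls $|f_n'|$ from below on $\Delta_n\setminus\Delta_{n+k}$, and this lower bound is the $\kappa$ fed into Proposition~\ref{prop:cone-inv}. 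Your ``uniform-in-$n$ bounds on $|f'|,|g'|$'' is the correct mechanism; the chain-rule step you append afterwards is spurious.
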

\begin{proof}
Let ${\mathfrak Z}_n=(f_n(x)+\tau_n(x,y),g_n(x)+\chi_n(x,y))$. By  \lemref{lem-preren}, the uniform norms of $\tau_n$ and $\chi_n$ on $\Delta_n$ are bounded from above by $O(\delta^{2})$.

Notice that $\Delta_{n+k}$ is an image of $\Delta_n$ under a linear map which converges to $(\alpha_*^k,0)$ as $n \rightarrow \infty$. Therefore, if $(x,0) \in \left( \Delta_n\setminus \Delta_{n+k} \right) \cap \{y=0\}$, then  
$$ C_2 |\alpha_*|^{n}  >  |x| >C_1 |\alpha_*|^{n+k},$$
for some $C_1$ and $C_2$, which gives
$$ C_4 |\alpha_*|^{n} > |f_n'(x)|, |g_n'(x)| > C_3 |\alpha_*|^{n+k},$$ 
for some $C_3$ and $C_4$. The result follows from \propref{prop:cone-inv} with $\eps=O(\delta^{2})$,  $\kappa=O(|\alpha_*|^{n+k})$ and $K=O(|\alpha_*|^{n})$.
\end{proof}

The following result will be used in the proof of Proposition \ref{prop:gauss}. 
\begin{lem}(L\"owner \cite{Low}) \label{Lowner}
Let $f:\DD \mapsto \DD$ be holomorphic with $f(0)=0$. If $f$ extends to a homeomorphism of $\partial \DD$, then $f$ is a rotation. 
%Suppose that $f$ maps a non-trivial arc $I \subset \partial \DD$ of length $\ell_1$ onto an arc $f(I) \subseteq \partial \DD$ of legth  $\ell_2$. Then $\ell_2 \ge \ell_1$ with equality iff $f$ is a rotation.
\end{lem}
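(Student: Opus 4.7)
The plan is to use the argument principle to show that $f$ has exactly one simple zero in $\DD$, namely at the origin, and then to apply the maximum and minimum modulus principles to the holomorphic function $g(z) := f(z)/z$.

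First, the hypothesis that the continuous boundary extension $f|_{\partial\DD}$ is a homeomorphism of $\partial\DD$ onto itself implies, via the argument principle on $\DD$, that the number of zeros of $f$ in $\DD$ (counted with multiplicity) equals the winding number of $f|_{\partial\DD}$ about $0$. Since a homeomorphism of the circle has degree $\pm 1$, and since $f(0)=0$ forces this zero count to be at least one, the winding number is $+1$ and $f$ has a single simple zero in $\DD$, located at the origin. In particular $f'(0)\neq 0$ and $f$ is non-vanishing on $\DD\setminus\{0\}$.

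Next, define $g(z)=f(z)/z$ on $\DD\setminus\{0\}$ and extend by $g(0)=f'(0)$. By the previous step, $g$ is a holomorphic, non-vanishing function on $\DD$ that extends continuously to $\overline{\DD}$ and satisfies $|g(z)|=|f(z)|/|z|=1$ for $z\in\partial\DD$. Applying the maximum modulus principle to $g$ gives $|g|\leq 1$ on $\DD$, while applying it to $1/g$ gives $|g|\geq 1$ on $\DD$. Hence $|g|\equiv 1$, and the open mapping theorem forces $g$ to be a unimodular constant $c$. Therefore $f(z)=cz$ with $|c|=1$, so $f$ is a rotation.

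The proof is essentially one-complex-dimensional and I do not anticipate any serious obstacle; the only subtle point is the degree counting at the boundary, which is handled as soon as one observes that $f|_{\partial\DD}$ is a homeomorphism of the unit circle with $f(0)=0$ lying in the bounded complementary component.
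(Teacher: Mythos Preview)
Your proof is correct. The paper does not supply its own proof of this lemma; it simply attributes the result to L\"owner and uses it as a black box in the proof of Proposition~\ref{prop:gauss}. So there is nothing in the paper to compare against.

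A couple of remarks on the argument itself. The only point that merits a sentence of justification is the applicability of the argument principle when $f$ is holomorphic on $\DD$ but merely continuous on $\overline{\DD}$: since $|f|=1$ on $\partial\DD$, zeros of $f$ cannot accumulate at the boundary, hence there are finitely many, and one passes to the limit along circles $|z|=r$ as $r\to 1^-$. Once that is noted, your zero-counting step is clean; the degree of a circle homeomorphism is $\pm 1$, and the existence of the zero at the origin forces degree $+1$.

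Your approach is in fact the standard route to showing that any holomorphic self-map of $\DD$, continuous on $\overline{\DD}$ with unimodular boundary values, is a finite Blaschke product: count the zeros by the argument principle, divide them out, and apply the maximum and minimum modulus principles to the quotient. You have specialized this to the case where the boundary map has degree one, which forces a single Blaschke factor $cz$.
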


\noindent
We can now complete the proof of Theorem \ref{thm:C}:
\begin{prop}\label{prop:gauss}
\label{prop:proof}
There exists $\delta>0$ such that the following holds. Let $H_{l_*,\mu}\in W^s(\zeta_\lambda)$ with $|\mu|<\delta$ and let $\gamma_*$ be the invariant curve constructed in \propref{prop-curve}. Then $\gamma_*$ bounds a Siegel disk for $H_{l_*,\mu}$. The eigenvalue $l_*$ is equal to $\lambda_1$,
\begin{equation}
\label{eq:gauss}
\lambda_1=e^{2\pi i\theta_1}\text{ with }\theta=G^m(\theta_1)\text{ for some }m\geq 0.
\end{equation}  
Finally, there exists $\eps_1>0$ such that for all $|\mu|<\eps_1$ and for all $\lambda_1$ satisfying (\ref{eq:gauss}), we have  $H_{\lambda_1,\mu}\in W^s(\zeta_\lambda)$.
\end{prop}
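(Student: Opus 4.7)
I would prove this in four interlocking steps: (i) identify the angular eigenvalue at the fixed point with $\theta_1$; (ii) invoke Siegel's theorem to produce a Siegel disk $\Delta_H$; (iii) show $\gamma_*=\partial\Delta_H$; and (iv) extract the converse parameter statement from transversality.

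For (i) the key point is that $\cRG=\cR^n$ acts on rotation numbers as $n$ iterations of the Gauss map. The $n$-th pre-renormalization of $H_{l_*,\mu}$ is built from the iterates $H^{q_n}$ and $H^{q_{n+1}}$, where the $q_k$ are the continued-fraction denominators of $\theta_1 := \arg(l_*)/(2\pi)$. For $\cRG^{r}H_{l_*,\mu}$ to converge to $\zeta_\lambda$---whose combinatorics are those of $\theta$---the continued-fraction expansion of $\theta_1$ must coincide with that of $\theta$ after the first $rn$ entries; equivalently $G^{rn}(\theta_1)=\theta$. Setting $m=rn$ gives $l_*=e^{2\pi i\theta_1}$ with $G^m(\theta_1)=\theta$. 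For (ii) one observes that a Gauss preimage of a $G$-periodic $\theta$ has eventually periodic continued fraction expansion, hence is a quadratic irrational, in particular Diophantine; Siegel's theorem then produces a linearizing $\phi:\DD\times\CC\to\CC^2$ conjugating $H_{l_*,\mu}$ to a skew product $(x,y)\mapsto(\lambda_1 x,\mu(x)y)$, with Siegel disk $\Delta_H=\phi(\DD\times\{0\})$ sitting inside $W^c(\mathbf{q})$.

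Step (iii) is the heart of the matter. I would first show that $\gamma_*\subset\phi(\DD\times\CC)$, the Siegel cylinder. This uses \propref{prop:hyp1}: the super-exponential backward expansion of the vertical cone field in every shell $\Delta_n\setminus\Delta_{n+k}$ forces backward orbits of any point with nontrivial transverse component to escape, whereas the rigid-rotation dynamics on $\gamma_*$ keeps backward orbits uniformly bounded; thus $\gamma_*$ must lie in the stable manifold of $\Delta_H$. In the linearizing coordinates $\phi^{-1}(\gamma_*)$ is then an invariant topological circle of the skew product; since $|\mu|<1$, the functional equation $y(t+\theta_1)=\mu(x(t))y(t)$ combined with minimality of $T_{\theta_1}$ and continuity of $y$ forces $y\equiv 0$, so $\phi^{-1}(\gamma_*)=\{|x|=r\}\times\{0\}$ for some $0<r\leq 1$. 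By \propref{prop-curve} the curve $\gamma_*$ meets every $\Omega_n$, $\Gamma_n$ and therefore accumulates on the image of the critical point $z=0$ of $\zeta_\lambda$ under the renormalization microscope---a point that sits on $\partial\Delta_H$, not in its interior---so $r=1$, giving $\gamma_*=\phi(S^1\times\{0\})=\partial\Delta_H$ after Carath\'eodory extension.

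For (iv), given any $\lambda_1=e^{2\pi i\theta_1}$ with $G^m(\theta_1)=\theta$, I would rerun the transversality argument of Theorem~\ref{th:intersect} shifted $m$ levels down the Gauss tower: the appropriate rescaled family $l\mapsto\cRG^{\bullet}H_{l,\mu}$ remains a $C^1$-small perturbation of a family lying in the unstable cone field of $\cRG$ at $\zeta_\lambda$, hence still meets $W^s_{\mathrm{loc}}(\zeta_\lambda)$ transversally at a unique nearby $l_*(\mu,\lambda_1)$; combining with step (i) forces $l_*(\mu,\lambda_1)=\lambda_1$. The main obstacle, in my view, is the first half of step (iii): promoting the quantitative cone-field expansion of \propref{prop:hyp1} into a genuine exclusion of $\gamma_*$ from outside the Siegel cylinder. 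The super-exponential rate $|\alpha_*|^{-n-k}\delta^{-2^n}$, rather than a soft topological argument, seems essential here.
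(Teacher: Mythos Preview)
Your step (i) contains the central gap. You define $\theta_1:=\arg(l_*)/(2\pi)$ and then argue that the renormalization combinatorics force the continued fraction of $\theta_1$ to agree with that of $\theta$ after finitely many entries. But membership in $W^s(\zeta_\lambda)$ is a condition on the pair built from iterates of $H_{l_*,\mu}$, and the combinatorics of those iterates are dictated by $\theta$ (the target), not by the eigenvalue at the fixed point $\mathbf q$. Nothing you have written excludes the possibility that $\mathbf q$ is attracting or hyperbolic (so that $|l_*|\ne 1$), in which case your step (ii) cannot even be invoked and the identification $l_*=e^{2\pi i\theta_1}$ is simply false. The paper handles exactly this issue by a case analysis: using the Graph Transform to produce a weak invariant manifold $W$ through $\mathbf q$, extending it via the normal hyperbolicity of \propref{prop:hyp1} so that $\gamma_*\subset\overline W$, and then ruling out the attracting and hyperbolic cases by L\"owner's lemma (a holomorphic self-map of the disk fixing $0$ which extends to a homeomorphism of $S^1$ is a rotation, contradicting the fact from \propref{prop-curve} that $H_{l_*,\mu}|_{\gamma_*}$ is a homeomorphism but not a diffeomorphism). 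Only after this does the paper know $\mathbf q$ is semi-neutral, and even then it identifies $l_*$ with $e^{2\pi i\theta_1}$ by an approximation argument through nearby bounded-type Siegel maps, not by reading off continued-fraction digits directly.

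Your step (iii) is a genuinely different route from the paper's, and the outline is attractive, but it too leans on step (ii) and hence on the missing neutrality of $\mathbf q$. The sub-argument that $\gamma_*$ lies in the Siegel cylinder because backward orbits with a transverse component must escape is a heuristic that would require real work: \propref{prop:hyp1} gives cone expansion only on the shells $\Delta_n\setminus\Delta_{n+k}$, and you would need to control the full backward orbit, not just its passages through those shells. Your step (iv) matches the paper's converse argument and is fine once (i)--(iii) are in place.
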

\begin{proof}
Let us select $k$, $N$, and $\varrho$ as in \propref{prop:hyp1}. Let ${i} \geq N$. Fix an open subdomain $\hat \Delta_{i}\Subset \Delta_{i}\cap\Delta_{i}'$. 
Since $H_{l_*,\mu}$ is an $\delta$-small perturbation of the Siegel quadratic polynomial $P_{l_*}$, we can select $\delta>0$ small enough so that the map $H_{l_*,\mu}$ is normally hyperbolic in a sufficiently large neighborhood of the $\alpha$-fixed point of $P_{l_*}$. In particular, by \propref{prop:hyp1}, it is normally hyperbolic in the set $\hat \Delta_{i} \setminus \Delta_{{i}+k}$.  Let $\mathbf q$ be the fixed point of $H_{l_*,\mu}$ which is closest to the $\alpha$-fixed point of $P_{l_*}$. By the Graph Transform, the map $H_{l_*,\mu}$ has a weak stable/unstable/center manifold $W$ of $\mathbf q$ which is $\delta$-close to the slice $\{y=0\}$ (see \cite{HPS}), and therefore  $W \cap \hat \Delta_{i}\neq \emptyset$ if $\delta$ is sufficiently small. 

Let us begin with the case when $\mathbf q$ is attracting. By \propref{prop:hyp1} the inverse branches of ${\mathfrak Z}_{{i}+mk}$, $m \geq 0$ are normally hyperbolic in $\Delta_{{i}+m k} \setminus \Delta_{{i}+(m+1) k}$. Therefore,  the weak attracting submanifold $W$ interesects $\Delta_{{i}+m k}$ for all $m \in \NN$.   We conclude that the invariant curve $\gamma_*$ lies in the closure of $W$.
%By \propref{prop-curve}, the restriction $H_{l_*,\mu} \arrowvert_{\gamma_*}$ is a homeomorphism, but not a diffeomorphism, therefore $\gamma_*$ can not lie in the interior of $\overline W$.
Applying L\"owners \lemref{Lowner}, we arrive to a contradiction.

Suppose $\mathbf q$ is hyperbolic. Then $W=W^u({\mathbf q})$, the unstable manifold of $\mathbf q$, and successive applications of \propref{prop:hyp1} as above imply that $W$ extends to the invariant curve $\gamma_*$, which is then its boundary. This, again, contradicts  L\"owners \lemref{Lowner}.

Finally, suppose that $\mathbf q$ is semi-neutral (that is, the linear part of the H\'enon map at $\mathbf q$ has a neutral eigenvalue of absolute value 1 and a dissipative eigenvalue of absolute value smaller than 1). In this case $W=W^c({\mathbf q})$: it is only smooth, and {\it a priori}, not uniquely defined. The restriction $H_{l_*,\mu} \arrowvert_W$ is not necessarily holomorphic. 

By density of the irrationals of bounded type in the circle, we can choose a sequence  $H_{l_j,\mu}$ of maps whose neutral eigenvalue $l_j=e^{2 \pi i \vartheta_j}$ for some angle $\vartheta_j \in \RR\setminus \QQ$ of bounded type, converging to $H_{l_*,\mu}$. By continuity of the renormalization operator, for every $M \in \NN$, there exists $J=J(M)$, such that for all $j>J(M)$ $H_{l_j,\mu}$ is ${i}+M k$ times renormalizable  with the height of the renormalizations coinciding with those for the map $H_{l_*,\mu} $. The Siegel  disk $W_j$ of $H_{l_j,\mu}$ is an analytic submanifold of $\C^{2}$.  Applying \propref{prop:hyp1} to the inverse branches of ${\mathfrak Z}_{{i}+mk}^j$, $0 \leq m \leq M$ of $H_{l_j,\mu}$, and using considerations of dominated splitting, we can extend $W_j$ for large $j$ to intersect each $\hat \Delta_{{i}+km}$,  $0 \leq m \leq M$. The rotation numbers of the orbits of points in $W_j \cap \hat \Delta_{{i}+km}$, whose continued fraction expansion is given by the renormalization heights, approach $\theta_1$.  Since, the rotation number  of the orbits of  $H_{l_j,\mu} \arrowvert_{W_j}$ is constant, $\vartheta_j \mapsto \theta_1$, $D H_{l_*,\mu}(\mathbf q)=\lim_{j  \rightarrow \infty} D H_{l_j,\mu}(\mathbf q_j)$, and $l_*=e^{2 \pi i \theta_1}$.  Therefore, $W$ is a Siegel disk  for $H_{l_*,\mu}$, and $H_{l_*,\mu} \arrowvert_W$ is holomorphic. By \propref{prop:hyp1} the submanifold $W$ interesects $\Delta_{{i}+m k}$ for all $m \in \NN$, and, therefore $\gamma_*$ lies in the closure of $W$.  By \propref{prop-curve}, the restriction $H_{l_*,\mu} \arrowvert_{\gamma_*}$ is a homeomorphism, but not a diffeomorphism, therefore $\gamma_*$ cannot lie in  $W$.

Conversely, let $\lambda_1=e^{2\pi i\theta_1}$ satisfy (\ref{eq:gauss}). As shown in Theorem~\ref{th:intersect}, if $\mu$ is small enough, then the family $l\mapsto H_{l,\mu}$ intersects the stable set of  $\zeta_\lambda$ near $P_{\lambda_1}$.
 Denote $l=\lambda_2$ the parameter of the intersection. As we have shown above, if $|\mu|<\eps$, then
$\lambda_2=e^{2\pi i\theta_2}$, where $\theta=G^j(\theta_2)$. The digits in the continued fraction expansion of $\theta_2$ correspond to the periods of renormalizations of $H_{\lambda_2,\mu}$. By considerations of continuity, if $\mu$ is small enough, then the digits in the continued fractions of $\theta_2$ and $\theta_1$ coincide, and hence, $\lambda_2=\lambda_1$.
\end{proof}

%!TEX root = main.tex
\medskip
\noindent

\end{document}